\newcommand{\R}{\mathbb{R}}
\newcommand{\h}{\mathcal{H}}
\newcommand{\E}{\mathcal{E}}
\newcommand{\p}{\mathcal{P}}
\newcommand{\uu}{\hat{u}}
\newcommand{\ur}{\hat r}
\newcommand{\bu}{\bar u}
\newcommand{\br}{\bar r}
\newcommand{\vep}{\varepsilon}
\newcommand{\supp}{\operatorname{supp}}
\newcommand{\loc}{\operatorname{loc}}
\definecolor{ffqqqq}{rgb}{1,0,0}
\definecolor{qqqqff}{rgb}{0,0,1}
\newtheorem{thm}{Theorem}[section]
\newtheorem{cor}[thm]{Corollary}
\newtheorem{lem}[thm]{Lemma}
\theoremstyle{definition}
\newtheorem{defn}[thm]{Definition}
\newtheorem{rem}[thm]{Remark}
\numberwithin{equation}{section}
\author[D.~Kriventsov]{Dennis Kriventsov}
    \address{Dennis Kriventsov. Department of Mathematics\\
    Rutgers University\\
110 Frelinghuysen Rd., Piscataway, NJ 08854, USA}
    \email{dennis.kriventsov@rutgers.edu}
\author[M.~Soria-Carro]{Mar\'ia Soria-Carro}
    \address{Mar\'ia Soria-Carro. Department of Mathematics\\
    Rutgers University\\
110 Frelinghuysen Rd., Piscataway, NJ 08854, USA}
    \email{maria.soriacarro@rutgers.edu}
\title[An Elliptic-Parabolic Free Boundary Problem]{An Elliptic-Parabolic Free Boundary Problem with Discontinuous Data}
\keywords{Elliptic-parabolic free boundary problems, flows in partially saturated porous media, regularity of weak solutions and interfaces.}
\subjclass[2020]{Primary: 35R35, 35K65. Secondary: 35B65.}
\begin{document}

\maketitle

\begin{abstract}
We consider an elliptic-parabolic free boundary problem that models the fluid flow through a partially saturated porous medium. 
The free boundary arises as the interface separating the saturated and unsaturated regions. Our main goal is to investigate, for the 1+1 dimensional model, how jump discontinuities on the boundary and initial data influence the regularity of both the solution and the free boundary.
We show that if the data is merely bounded,
then weak solutions are Lipschitz in space and $C^{1/2}$ in time in the unsaturated region. Moreover, the free boundary is locally the graph of a $C^{1/2}$ function, and this regularity is optimal. We view this analysis as a stepping stone towards the study of local regularization for higher-dimensional elliptic-parabolic free boundaries.
\end{abstract}

\section{Introduction}

The slow flow of an incompressible fluid through a one-dimensional homogeneous partially saturated porous media is described by the quasilinear equation
\begin{equation} \label{eq:main}
\partial_t c(u) - u_{xx}=0,
\end{equation}
where $c: \R\to\R$ is a Lipschitz continuous function satisfying 	
\begin{equation} \label{eq:c}
c(s)=0 \ \text{ if } s\leq 0 \quad \text{and} \quad c'(s)>0 \ \text{ if } s>0.
\end{equation}
The quantity $u$ stands for the hydrostatic potential due to capillary suction and $c(u)$ for the relative moisture content. 
Here, the porous medium is saturated whenever $c$ is identically zero, and unsaturated otherwise. Formally, writing the equation as
\begin{equation} \label{eq:main2}
c'(u) u_t - u_{xx}=0,
\end{equation}
we observe that \eqref{eq:main2} is elliptic whenever $u<0$, and parabolic whenever $u>0$. This leads to an elliptic-parabolic free boundary problem, where the zero-level set of $u$ splits the domain into the saturated and unsaturated regions. For a precise derivation of the model see~\cite{B, vDP}.

\subsection{Brief history}  The study of such problems started in the early 1980s with the work of Fasano and Primicerio~\cite{FP}, and van Duyn and Peletier~\cite{vDP,vD}, who considered constant Dirichlet boundary conditions. The latter introduced a notion of weak solution and developed a comprehensive theory including existence, uniqueness, a maximum principle, regularity, and continuity of the free boundary. More details are provided in Section~\ref{sec:classical}.

Hulshof and Peletier~\cite{HP,H} extended these results to Robin boundary conditions with Lipschitz continuous data, and Bertsch and Hulshof~\cite{BH} proved higher regularity of weak solutions under additional assumptions on $c(u)$ and the initial data; see also~\cite{vDH}. 

Around the same time, Alt and Luckhaus~\cite{AL} established existence and uniqueness of weak solutions to more general elliptic-parabolic systems in any dimension.  
For higher-dimensional flows satisfying \eqref{eq:main}-\eqref{eq:c}, DiBenedetto and Gariepy~\cite{DiBG} proved local continuity of $c(u)$, and Hulshof and Wolanski~\cite{HW} studied the regularity of the free boundary for monotone flows.

In the 2000s, Mannucci and V\'azquez~\cite{MV} introduced a viscosity solution framework for the one-dimensional setting, which was later extended to fully nonlinear equations in higher dimensions by Kim and Po\v{z}\'ar~\cite{KP}.

For further developments, see \cite{D1,D2}, and the references therein.

\subsection{Challenges} The main difficulty in this model arises from the degeneracy of the equation in the saturated region, where time plays the role of a parameter. As a result, the time regularity in this region cannot be better than the regularity of the boundary conditions.
In fact, even with continuous data, it is possible to construct solutions that are discontinuous in time. For example, taking constant negative Dirichlet boundary data and positive initial data, the solution becomes stationary after a finite time. 

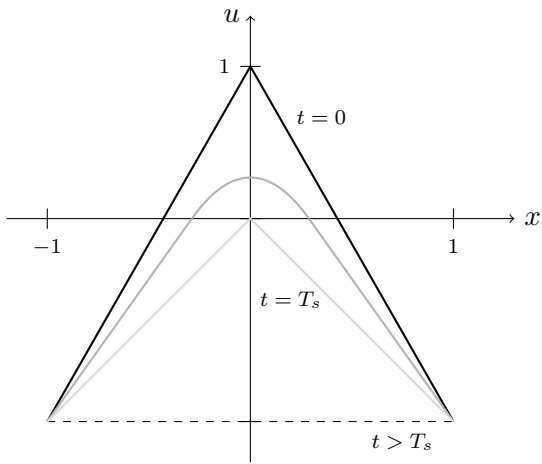
\begin{figure}[h] 
\centering
\begin{tikzpicture}[scale=2.7]

\draw[->] (-1.2, 0) -- (1.3, 0) node[right] {$x$};
\draw[->] (0, -1.2) -- (0, 1) node[left] {$u$};

\foreach \x in {-1,1}
  \draw (\x, 0.05) -- (\x, -0.05) node[below] {\scriptsize $\x$};
\draw (0.05, -1) -- (-0.05, -1); 
\draw (0.05, 0.75) -- (-0.05, 0.75) node[left] {\scriptsize $1$};
\draw (0.05, 0) -- (-0.05, 0); 

\draw[thick] (-1,-1) -- (0,0.75) -- (1,-1);
\node at (0.35, 0.5) {\scriptsize $t = 0$};

\draw[dashed] (-1,-1) -- (1,-1);
\node at (0.75, -1.1) {\scriptsize $t > T_s$};

\def\a{0.6}   
\def\h{0.3}   

\pgfmathsetmacro{\xzero}{\a*sqrt(\h/(\h+1))}

\pgfmathsetmacro{\m}{(-1-0)/(-1-(-\xzero))}

\pgfmathsetmacro{\hnew}{\m*\xzero/2}

\draw[thick, gray!60, domain=-\xzero:\xzero, samples=100] plot(\x, {\hnew*(1 - (\x/\xzero)^2)});

\draw[thick, gray!60] (-\xzero,0) -- (-1,-1);
\draw[thick, gray!60] (\xzero,0) -- (1,-1);

\draw[thick, gray!30] (-1,-1) -- (0,0) -- (1,-1);
\node at (0.2, -0.4) {\scriptsize $t = T_s$};

\end{tikzpicture}
\caption{Discontinuous solution at saturation time $t=T_s$.}
\label{fig1}
\end{figure}
As shown in Figure~\ref{fig1}, the parabolic region collapses at  $t=T_s$, and  after this time, the solution jumps to the stationary boundary data~\cite{AL}.
However, if $u$ is bounded, then $c(u)$ must be continuous, which is a local property satisfied by this function \cite{DiBG}.

On the other hand, heuristically, $u$ satisfies the two-phase free boundary problem
\begin{equation} \label{pb:fbp}
    \begin{cases}
c'(u) u_t -u_{xx}=0 & \hbox{ in } \{u>0\},\\
u_{xx}=0 & \hbox{ in } \{u<0\},\\
u_x^+ = u_x^- & \hbox{ on } \{u=0\},
\end{cases}
\end{equation}
where $u_x^+$ and $u_x^-$ are the one-sided spatial derivatives of $u$  as $x$ approaches the free boundary from the positive and negative phase, respectively.
Since $u$ is harmonic in $\{u<0\}$, then in this set, $u^-$ is a linear function with respect to $x$ for almost every time. From the free boundary condition, $u_x^+=u_x^-$, it follows that the Dirichlet data for $u^-$ becomes the Neumann data for $u^+$. Consequently, jump discontinuities on the boundary data will propagate into the interior of the domain. 
These nonlocal effects significantly increase the difficulty of the analysis.  

If $c$ is convex near $0$ and $c'(0)=0$, then $u$ is a classical solution to \eqref{eq:main} (see \cite{BH}).
In this work, we will focus on the case $c'\geq c_0>0$ on $\{u>0\}$. 
 Although this condition introduces a jump discontinuity in $c'$ at $0$, it ensures that the equation remains uniformly parabolic in the unsaturated region.
 In this case,  \eqref{pb:fbp} can also be understood as the limiting problem for the evolution of two phases with different time scales of diffusion, considered in \cite{E,KSC}.

It is also worth noting that, in the positive phase,
$u$ satisfies a parabolic Bernoulli problem with (possibly discontinuous) time-dependent free boundary condition. Problems of this type arise in combustion theory and were first introduced in \cite{CV}.

Our overarching goal is to understand the extent to which elliptic-parabolic free boundaries regularize locally under the evolution. Unlike for other geometric evolution equations, such as mean curvature flow, Stefan problems, Bernoulli-type problems, and porous medium equations, here there are no genuinely local results in the literature concerning the smoothness of the free boundary. What is available is either global in nature (see \cite{HW}) or only gives information about the solution in the parabolic phase, not the interface (see \cite{DiBG}). 

In light of the nonlocal nature of the elliptic phase, this is not surprising: perhaps a collapse like in Figure \ref{fig1} occurs somewhere else and then immediately impedes the smoothing of the interface via a discontinuity in the elliptic phase. In this paper, we ask: if a jump in the elliptic phase does occur for global reasons, what are the local effects of this on the interface? In one dimension, we are able to answer this completely. We leave open the interesting question of how this extends to higher dimensions, where it makes sense to look not just at worst-case but at \emph{generic} behavior of the free boundary.

\subsection{Main assumptions and results}
Assuming only that the data is bounded by some universal constants, we aim to find local properties that depend only on these universal bounds. 
More precisely, given $0< \lambda \leq\Lambda$, we assume the following conditions on the data:
\begin{enumerate}
\item[$(H_1)$] $c(u)=u^+:=\max\{u,0\}$; 
\item[$(H_2)$] $-\Lambda \leq u(-1,t)\leq -\lambda$ and $\lambda \leq u(1,t)\leq \Lambda$  for a.e.  $t\in  [-1,0]$;
\item[$(H_3)$] $u^+(x,0)=0$ for a.e.~$x\in [-1,0]$ and $0<u^+(x,0) \leq \Lambda$ for a.e.~$x\in (0,1]$.
\end{enumerate}
In $(H_1)$, we consider the model case 
of a function satisfying the condition \( c' \geq c_0 > 0 \) on \( \{u > 0\} \). This choice simplifies the analysis while still capturing the essential features of the problem. In $(H_2)$, the sign condition on the boundary data prevents the medium from becoming fully saturated in finite time. In $(H_3)$, the sign condition on the initial data guarantees that the free boundary will be a graph for future times. 

For $r>0$, let $Q_r:= (-r,r) \times (-r^2,0]$. 
Given a parabolic domain $Q\subseteq \R^{n+1}$, we define $C_{x,t}^{1,1/2}(\overline{Q})$ as the space of continuous functions $u$ on $\overline{Q}$ such that
$$
  [u]_{C_{x,t}^{1,1/2}(\overline Q)} :=\sup_{\substack{(x,t), (y, s) \in \overline Q\\ (x,t) \neq (y,s)}} \frac{|u(x,t)-u(y,s)|}{( |x-y|^2 + |t-s| )^{1/2}}< \infty.
$$
We call a constant $C>0$ universal if it depends only on $\lambda$ and $\Lambda$. 

Our main result is the following:  

\begin{thm}\label{thm:main}
Let $u$ be a weak solution to $\partial_t c(u) - u_{xx}=0$ in $Q_1$ satisfying the assumptions $(H_1)-(H_3)$. There are universal constants $K,L,M,N>0$ such that the following hold:
\begin{enumerate}[$(i)$]
\item (Regularity of $u$). $u$ satisfies the estimates
$$
\|u_x\|_{L^\infty(Q_{1/2})} \leq L \quad \text{and}\quad [u^+]_{C_{x,t}^{1,1/2}(\overline{Q_{1/2}})} \leq M.
$$

\item (Non-degeneracy). There is a universal constant $0<\rho_0<1/2$ such that
$$
u_x \geq N \quad \text{ a.e. in } Q_{\rho_0}(x_0,t_0),
$$
for any $(x_0,t_0) \in \{u=0\}\cap Q_{1/2}$.\medskip

 \item (Regularity of the free boundary). 
There is a universal constant $0<\delta_0 <1/2$, and a continuous function 
$r: [-1/4,0]\to [-1+\delta_0,1-\delta_0]$, 
such that 
$$\{u=0\}\cap Q_{1/2} = \{ (x,t)\in Q_{1/2} : x = r(t)\}.$$
Furthermore, $r\in C^{1/2}([-1/4, 0])$ with 
$$[r]_{C^{1/2}([-1/4,0])}\leq K,$$
 and this is the optimal regularity.
\end{enumerate}
\end{thm}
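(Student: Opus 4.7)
The argument proceeds in two stages: first establishing the bulk regularity (i) and non-degeneracy (ii), and then deducing (iii) as a consequence. A key observation I would use throughout is that in the elliptic phase $\{u<0\}$ the equation reduces to $u_{xx}=0$, so on each time slice $u(\cdot,t)$ is linear: $u(x,t)=\alpha(t)(x-r(t))$ for $x\in[-1,r(t))$, with $\alpha(t)=-u(-1,t)/(1+r(t))$. Monotonicity $u_x\geq 0$ follows from a comparison argument using the sign conditions in $(H_2)$ and $(H_3)$. By barrier arguments on both sides (a linear function on the elliptic side and a caloric subsolution on the parabolic side), I would show that the free boundary stays at a universal distance $\delta_0>0$ from $x=\pm 1$, which gives $\alpha(t)\in[\lambda/2,\Lambda/\delta_0]$, hence a Lipschitz bound on $u$ in the elliptic phase.

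For the parabolic phase, $u^+$ satisfies the heat equation with zero Dirichlet data on the moving boundary $x=r(t)$ and Neumann data $u_x^+=\alpha(t)$ from the matching condition. After a change of variables $y=x-r(t)$ (treating $r$ as a given bounded function initially and bootstrapping its regularity later), standard parabolic boundary estimates yield $\|u_x^+\|_{L^\infty}\leq L$ and $[u^+]_{C^{1,1/2}_{x,t}}\leq M$, establishing (i). For (ii), the lower bound $u_x^+\geq \alpha(t)\geq \lambda/2$ on the free boundary is propagated into a universal neighborhood via the maximum principle for $u_x$ (which solves the heat equation in $\{u>0\}$) together with a Hopf-type barrier giving the radius $\rho_0$.

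Given (i) and (ii), the graph structure follows at once: monotonicity combined with $u_x\geq N>0$ near the free boundary forces each time slice $\{u(\cdot,t)=0\}$ to be a single point $r(t)\in[-1+\delta_0,1-\delta_0]$, and $r$ is continuous. For the $C^{1/2}$ modulus, fix $s,t\in[-1/4,0]$ close enough that $|r(s)-r(t)|<\rho_0$. If $r(s)>r(t)$, I apply non-degeneracy at the free-boundary point $(r(t),t)$ and integrate $u_x(\cdot,t)\geq N$ over $[r(t),r(s)]$ to obtain
\[
N\bigl(r(s)-r(t)\bigr)\leq u\bigl(r(s),t\bigr)=u^+(r(s),t)-u^+(r(s),s)\leq M\,|t-s|^{1/2},
\]
where I used $u^+(r(s),s)=0$ together with the regularity from (i). The opposite case $r(t)>r(s)$ is handled symmetrically using non-degeneracy at $(r(s),s)$ and integration at time $s$; the estimate extends to all $s,t\in[-1/4,0]$ by concatenation or the trivial bound $|r|\leq 1$.

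The main obstacle I anticipate is the optimality assertion. To show the $1/2$ exponent is sharp, I would construct an explicit solution whose free boundary is genuinely no better than $C^{1/2}$ by taking boundary data with a single jump in time, for instance $u(-1,t)=-\Lambda$ for $t<t^*$ and $u(-1,t)=-\lambda$ for $t\geq t^*$. The resulting discontinuity in the elliptic slope $\alpha(t)$ imposes discontinuous Neumann data $u_x^+$ on the free boundary at $t^*$, and a self-similar rescaling around $(r(t^*),t^*)$ should reduce the local analysis to an explicit Stefan-type problem whose free boundary moves like $c\sqrt{t-t^*}$ with $c>0$, proving that no Holder exponent better than $1/2$ can be universal.
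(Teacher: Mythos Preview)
Your deduction of the $C^{1/2}$ bound in (iii) from (i) and (ii) is correct and in fact simpler than the paper's route: the paper applies a Hodograph transform and then interior Schauder estimates for the resulting quasilinear equation, whereas your one-line inequality $N|r(s)-r(t)|\le |u^+(r(s),t)-u^+(r(s),s)|\le M|t-s|^{1/2}$ gets there directly once (i) and (ii) are available. The gaps are upstream.

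For (i), the change of variables $y=x-r(t)$ introduces a drift $-r'(t)u_y$, and at this stage $r$ is only known to be continuous; the ``standard parabolic boundary estimates'' you invoke already presuppose the regularity of the lateral boundary that you are ultimately trying to establish, so the bootstrap is circular. The paper never flattens the free boundary: it reads off $u_x^+$ on $\{u=0\}$ from the explicit linear formula on the elliptic side, bounds $u_x$ on a fixed vertical segment well inside $\{u>0\}$ by interior caloric estimates, and then applies a maximum principle for the caloric function $u_x$ in the region between, absorbing the uncontrolled initial trace of $u_x$ by comparison with a global heat solution (Lemmas~4.1--4.2).

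For (ii) the gap is more serious. A Hopf-type barrier requires an interior ball or cone condition on the domain, but here the free boundary is only a continuous graph; the paper explicitly flags this as the reason the classical Hopf lemma is unavailable. Moreover, while $u_x$ does solve the heat equation in $\{u>0\}$, its initial trace is controlled only in $L^2$ by the energy estimate, not pointwise, so the maximum principle alone cannot propagate the free-boundary bound $u_x^+\ge\alpha(t)$ into the interior. The paper's proof (Theorem~5.1) is accordingly the most delicate step: it selects a regular level curve $\{u=a\}$ via Sard's theorem, compares $u_x$ with an auxiliary caloric function that has zero initial data, and then---via a separate Hopf argument applied at an \emph{earlier} time where the free boundary touches a flat barrier---shows the strip between $\{u=0\}$ and $\{u=a\}$ is narrow enough that the bad initial data contributes only $O(a)$.

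Your optimality sketch is plausible in spirit, but a self-similar reduction after a jump in the Dirichlet data does not obviously close, because the free boundary location couples back nonlinearly into the Neumann data $\alpha(t)=-f(t)/(1+r(t))$. The paper instead runs a Schauder fixed-point argument on the Hodograph side (Theorem~6.4), producing a solution to a quasilinear Neumann problem whose boundary trace is bounded above by $-c\sqrt{t}$ via comparison with an explicit caloric barrier.
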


We refer the reader to Section~\ref{sec:classical} for the definition of weak solution. 
As discussed above, the lack of regularity in the data presents several challenges. For example, the classical methods for constructing weak solutions do not work in this setting (see Remark~\ref{rem:classical}), and establishing local regularity estimates is particularly difficult due to the nonlocal nature of the problem.

\subsection{Ideas and methodology} 
Our general strategy to address these difficulties consists on regularizing the initial and boundary data, and constructing approximating weak solutions via the methods outlined in Section~\ref{sec:mainstrategy}. To pass to the limit, we derive an energy estimate (Lemma~\ref{lem:energy}) that depends only on the universal bounds $\lambda$ and $\Lambda$ (see Section~\ref{sec:approxweak}).
This method suggests that to prove Theorem~\ref{thm:main}, it is enough to obtain quantitative universal estimates for the approximating weak solutions that will be preserved at the limit. We justify this fact rigorously in Lemma~\ref{lem:limit}. 

First, we establish the Lipschitz bound for the spatial derivative (Theorem~\ref{thm:lipschitz}). Working in one spatial dimension provides a key simplification: since $u$ is linear in the negative phase, for almost every time, then $u_x^-$ is bounded a.e., which in turn bounds $u_x^+$ on $\{u=0\}$. To propagate this estimate across the zero-level set, our main idea is to show that the free boundary remains uniformly separated from the lateral boundary (Lemma~\ref{lem:unifsep}). This separation allows us to apply interior parabolic regularity and a local in time maximum principle to obtain the Lipschitz bound up to the free boundary. Moreover, on the parabolic side, we refine the time regularity using a comparison argument with a global solution (Theorem~\ref{thm:timereg}).

The non-degeneracy (Theorem~\ref{thm:monotonicity}) is one of the most delicate steps in the proof. A priori, the free boundary is only known to be a continuous graph, so classical tools like the parabolic Hopf Lemma are not applicable.
Our approach combines an analysis of the behavior of level sets of caloric functions with a careful construction of suitable barriers.  Thanks to this uniform spatial monotonicity, we can apply the Hodograph transform to $u$, and derive boundary estimates for the transformed problem, implying the $C^{1/2}$ time regularity of the free boundary (Theorem~\ref{thm:interface}). To the best of our knowledge, this method is new in the context of elliptic-parabolic free boundary problems.
Finally, to justify the optimality, we produce a solution, using a subtle fixed point argument, for which the free boundary satisfies $r(t)\sim \sqrt{t}$ (Theorem~\ref{thm:optimal}).

\medskip
The paper is organized as follows. In Section~2, we give a brief summary of the main results of the classical theory of weak solutions with Lipschitz data. In Section~3, we derive the energy estimate and introduce the approximating weak solutions. We prove the regularity estimates from part~$(i)$ in Section~4, the non-degeneracy from part~$(ii)$ in Section~5, and the optimal regularity of the free boundary regularity from part~$(iii)$ in Section~6.

\section{Weak theory for Lipschitz data} \label{sec:classical}

In this preliminary section, we summarize the main classical results for weak solutions with Lipschitz continuous data. 
For $T>0$, consider the Cauchy-Dirichlet problem:
\begin{equation} \label{pb:main}
\begin{cases}
\partial_t c(u) - u_{xx} =0 & \text{ in } Q_T:=(-1,1)\times (0,T],\\
u(-1,t) = f(t), \ u(1,t)= g(t) & \text{ for } 0<t \leq T,\\
c(u(x,0)) = v_0(x) & \text{ for } -1 \leq x \leq 1. 
\end{cases}
\end{equation}
The functions $c$, $f$, $g$, and $v_0$ satisfy the following assumptions:
\begin{enumerate}
\item[$(A_1)$] $c: \R\to\R$ is a Lipschitz function on $\R$ and differentiable on $(0,\infty)$ satisfying
$c(s)=0$ if $s\leq 0$ \text{and}  $c'(s)>0$  if $s>0.$
\item[$(A_2)$] $f, g:[0,T] \to \R$ are Lipschitz functions such that $f<0$ and $g>0$ on $[0,T]$.
\item[$(A_3)$] There exists a Lipschitz function $u_0 : [-1,1] \to \R$ such that $c(u_0)=v_0$ on $[-1,1]$. Moreover,
$u_0(-1)= f(-1)$ and $u_0(1)=g(1)$. 
\item[$(A_4)$] In addition, $u_0 \in C^{2+\gamma}([-1,1])$ for some $0<\gamma\leq 1$, $u_0''(-1)=u_0''(1)=0$, and $u_0'' \geq - \kappa c'(u_0)$ whenever $u_0 \neq 0$, for some $\kappa>0$.
\item[$(A_5)$] There is  $r_0 \in (-1,1)$ such that $v_0(x)=0$ if $-1\leq x\leq r_0$ and $v_0(x)>0$ if $r_0< x \leq 1$.  
\end{enumerate}

We recall that $u: Q_T \to \R$ belongs to the energy space $L^2(0,T; H^1(-1,1))$ if 
$$
\iint_{Q_T} u^2 \, dxdt + \iint_{Q_T} u_x^2 \, dxdt<\infty. 
$$

\begin{defn}[Weak solution] 
A function $u \in L^2(0,T; H^1(-1,1))$ is called a weak solution to \eqref{pb:main} if the following holds:
\begin{enumerate}[$(i)$]
\item $c(u)\in C(0,T; L^2(-1,1))$;
\item $u(-1,t)=f(t)$ and $u(1,t)=g(t)$ for a.e.~$t\in [0,T]$;
\item $u$ satisfies the identity:
$$
\iint_{Q_T} ( u_x \varphi_x - c(u) \varphi_t) \, dxdt= \int_{-1}^1 v_0(x) \varphi(x,0) \, dx,
$$
for any $\varphi \in  C^1(\overline{Q_T})$ vanishing at $x=-1$, $x=1$ and $t=T$.
\end{enumerate}
\end{defn}

\subsection{Classical theory} 
The theory of weak solutions to \eqref{pb:main} was developed by van Duyn and Peletier \cite{vDP,vD} for $f\equiv -1$ and $g\equiv 1$, and extended by Hulshof and Peletier \cite{HP,H} for Robin boundary conditions with Lipschitz continuous data; see also \cite{BH}.

We summarize the main results in the next theorem.

\begin{thm}\label{thm:classical}
Assume the conditions $(A_1)-(A_3)$ hold. Then:
\begin{enumerate}[$(i)$] 
\item (Existence and Uniqueness). There exists a unique weak solution to \eqref{pb:main}.
\item (Improved regularity). The weak solution satisfies $$u\in L^\infty(0,T;W^{1,\infty}(-1,1))\cap L^2(0,T; H^2(-1,1))
\ \text{ and } \ c(u)\in C(\overline{Q_T}).$$
\item (Free boundary). Assume that $(A_4)$ and $(A_5)$ hold.
Then there exists a continuous function $r : [0,T] \to (-1,1)$ such that $r(0)=r_0$, and for each $t\in [0,T]$, we have
\begin{align*}
c(u(x,t))=0 \text{ for } -1<x\leq r(t) \ \text{ and } \ c(u(x,t))>0 \text{ for } r(t) < x < 1.
\end{align*}
\end{enumerate}
\end{thm}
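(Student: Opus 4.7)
The plan is to obtain (i)--(iii) via the classical regularization scheme of van Duyn--Peletier and Alt--Luckhaus; since these theorems are standard, I describe only the main ingredients. I would replace $c$ by a smooth approximation $c_\vep \in C^\infty(\R)$ with $c_\vep' \geq \vep > 0$ and $c_\vep \to c$ uniformly, and approximate $u_0,f,g$ by compatible smooth data. The regularized problem becomes uniformly parabolic and quasilinear with smooth data, hence admits a unique classical solution $u_\vep$ by Ladyzhenskaya--Solonnikov--Ural'tseva. The maximum principle yields a uniform $L^\infty$ bound on $u_\vep$, while applying the maximum principle to the equation satisfied by $u_{\vep,x}$ (using the Lipschitz bounds from $(A_2)$--$(A_3)$) gives a uniform bound $\|u_{\vep,x}\|_{L^\infty(Q_T)} \leq C$. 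Testing with $u_{\vep,t}$ gives uniform control of $\iint c_\vep'(u_\vep)\, u_{\vep,t}^2$, which via the equation supplies $u_\vep \in L^2_t H^2_x$ uniformly. Passing to a subsequential weak limit, together with Aubin--Lions-type strong compactness for $c_\vep(u_\vep)$ to handle the nonlinear term, yields a weak solution and the regularity in (ii). Continuity of $c(u)$ up to $\overline{Q_T}$ follows in one dimension from interior parabolic Schauder estimates for $u_\vep$ on $\{u_\vep > 0\}$ (uniform in $\vep$) together with $c(u) \equiv 0$ on the complement.

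For uniqueness, I would run the Alt--Luckhaus duality argument: given two weak solutions $u_1,u_2$ with common data, subtract the weak formulations and test with $\varphi(x,t) = \int_t^T \psi_\eta\big((c(u_1)-c(u_2))(x,\tau)\big)\,d\tau$, where $\psi_\eta$ is a smooth monotone approximation of $\sgn$. Because $c$ is monotone one has $\sgn(c(u_1)-c(u_2)) = \sgn(u_1-u_2)$ on $\{c(u_1)\ne c(u_2)\}$, so the gradient contribution carries a favorable sign, and in the limit $\eta\to 0$ one obtains $\int_{-1}^1 |c(u_1)-c(u_2)|(x,T)\,dx \leq 0$, whence $c(u_1) \equiv c(u_2)$ throughout $Q_T$. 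Consequently $u_1 = u_2$ on the common positive set, while on the saturated set both are linear in $x$ with matching values at $x=-1$ and at the free boundary, forcing $u_1 \equiv u_2$ there as well.

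Finally, for (iii), the role of $(A_4)$ is to supply a one-sided Bernstein bound on $u_t$. At the regularized level, differentiating the equation in $t$ gives a linear parabolic equation for $u_{\vep,t}$ in the positive phase, whose initial datum $u_0''/c_\vep'(u_0)$ is bounded below by $-\kappa$ thanks to $(A_4)$ and whose boundary data is controlled by the Lipschitz constants of $f,g$; the maximum principle then yields $u_{\vep,t} \geq -\kappa$ uniformly in $\vep$. In the limit this prevents the positive phase from collapsing and rules out the nucleation of new saturated components from its interior. Combined with $(A_5)$ -- which identifies the positivity set at $t=0$ as $(r_0,1]$ -- and the elementary fact that each connected component of $\{u(\cdot,t)<0\}$ consists of a linear function vanishing at both endpoints and hence must reach the lateral boundary, one concludes that on every time slice the saturated region is a single interval $[-1,r(t)]$ with $r$ monotone nonincreasing, and continuity of $r$ follows from that of $c(u)$ on $\overline{Q_T}$. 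The most delicate point is propagating the Bernstein bound through the degenerate interface uniformly in $\vep$, and this is what $(A_4)$ is precisely designed to make work; the detailed calculation is carried out in \cite{vDP,HP,BH}.
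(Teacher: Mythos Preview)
The paper does not give its own proof of this theorem: it is stated as a summary of classical results from \cite{vDP,vD,HP,H,BH}, and the regularization scheme you outline is exactly the strategy the paper recounts in Section~\ref{sec:mainstrategy}. Your sketch of existence via smooth approximations $c_n$ with $c_n'\geq 1/n$, the uniform $L^\infty$ and spatial Lipschitz bounds from the maximum principle (using the Lipschitz data in $(A_2)$--$(A_3)$), the $L^2_tH^2_x$ control, and the Alt--Luckhaus duality argument for uniqueness are all faithful to that classical approach.

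One correction in part (iii): you assert that $r$ is \emph{monotone nonincreasing}. This is neither part of the statement nor true in general for time-dependent Lipschitz data $f,g$; the one-sided bound $u_t\geq -\kappa$ coming from $(A_4)$ does not force the interface to move in a single direction. What that bound actually buys, together with $(A_5)$ and the fact that $u$ is linear in $x$ on each component of $\{u\leq 0\}$ (so an interior component would force $u\equiv 0$ there), is that for every $t$ the set $\{c(u(\cdot,t))=0\}$ is a single interval touching $x=-1$; continuity of $r$ then follows from the continuity of $c(u)$ on $\overline{Q_T}$. Drop the monotonicity claim and your outline matches the literature the paper is citing.
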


An important tool in the study of weak solutions is the comparison principle, which holds for the function $c(u)$; see \cite[Theorem~3]{vDP,HP}.

\begin{thm}[Comparison principle] \label{thm:comparison}
Let $c$ satisfy $(A_1)$.
Let $u_1$ and $u_2$ be the weak solutions to \eqref{pb:main} with data $v_{01}$, $f_1$, $g_1$, and $v_{02}$, $f_2$, $g_2$, respectively, which satisfy $(A_2)-(A_3)$. 
If $v_{01} \leq v_{02}$ on $[-1,1]$, $f_1 \leq f_2$ and $g_1\leq g_2$ on $[0,T]$, then $c(u_1)\leq c(u_2)$ in ${Q_T}$.
\end{thm}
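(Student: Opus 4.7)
The plan is to follow the Alt--Luckhaus test-function strategy: test the difference of the weak formulations for $u_1$ and $u_2$ against (a regularization of) $(u_1-u_2)^+$, and exploit the monotonicity of $c$ through a chain-rule identity.

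Set $u:=u_1-u_2$ and $w:=c(u_1)-c(u_2)$. Subtracting the two weak formulations yields the distributional identity $\partial_t w - u_{xx}=0$ in $Q_T$, with the lateral trace $u\le 0$ on $\{x=\pm 1\}$ coming from $f_1\le f_2$ and $g_1\le g_2$; in particular, $u^+\in L^2(0,T; H_0^1(-1,1))$. For each $\tau\in(0,T]$, I would test this identity against a Steklov time-average of $u^+(x,t)\chi_{[0,\tau]}(t)$ (which is smooth in $t$, vanishes at $t=T$ and on the lateral boundary, and converges to the cutoff in $L^2(0,T;H_0^1)$), integrate by parts in space on the right, and pass to the limit.

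The spatial side of the identity becomes $-\iint_{Q_\tau}((u^+)_x)^2\, dx\, dt\le 0$. For the time side, the Alt--Luckhaus chain rule (valid since $c$ is Lipschitz and nondecreasing and $c(u_i)\in C([0,T];L^2)$) provides
\begin{equation*}
\int_0^\tau\langle\partial_t w, u^+\rangle\, dt = \int_{-1}^1 \Psi(u_1,u_2)(x,\tau)\, dx - \int_{-1}^1 \Psi(u_{01},u_{02})(x)\, dx,
\end{equation*}
where $\Psi(a,b):=\int_b^{a\vee b}(c(s)-c(b))\, ds\ge 0$ by monotonicity of $c$. A short case analysis using the structure of $c$ (zero on $(-\infty,0]$, strictly increasing on $(0,\infty)$) shows that $v_{01}\le v_{02}$ a.e.\ forces $\Psi(u_{01},u_{02})=0$ a.e.: if $u_{01}\le u_{02}$ this is immediate, while if $u_{01}>u_{02}$ then $v_{01}\le v_{02}$ forces both to be nonpositive, and $c$ vanishes on the integration interval. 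Combining gives
\begin{equation*}
\int_{-1}^1 \Psi(u_1,u_2)(x,\tau)\, dx + \iint_{Q_\tau}((u^+)_x)^2\, dx\, dt = 0,
\end{equation*}
so $\Psi(u_1,u_2)(\cdot,\tau)\equiv 0$ a.e., and the same case analysis then yields $c(u_1)\le c(u_2)$ a.e.\ at time $\tau$, for every $\tau$.

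The main obstacle is the rigorous justification of the Alt--Luckhaus chain rule in this degenerate setting: $c'$ has a jump at $0$, and only $c(u_i)$ (not $u_i$) is known to be continuous in time into $L^2$. The standard remedy is to approximate $c$ by smooth strictly increasing $c_\vep$, for which the corresponding solutions $u_i^\vep$ are classical and the chain rule is routine, then pass to the limit in $\vep$ using the uniform Lipschitz bounds on $c_\vep$ and stability of weak solutions; this is the program carried out in detail in \cite{vDP,HP}, whose result is being invoked here.
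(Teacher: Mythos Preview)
The paper does not prove this theorem; it is quoted from \cite{vDP,HP}, so there is no in-paper argument to compare against, only the method in those references.

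Your central chain-rule identity is incorrect. Take $u_1,u_2$ smooth with $u_1>u_2$ and compute both sides:
\[
\frac{d}{dt}\Psi(u_1,u_2)=(c(u_1)-c(u_2))\,\dot u_1-c'(u_2)(u_1-u_2)\,\dot u_2,
\]
while
\[
(u_1-u_2)^+\,\partial_t\bigl(c(u_1)-c(u_2)\bigr)=(u_1-u_2)\bigl(c'(u_1)\,\dot u_1-c'(u_2)\,\dot u_2\bigr).
\]
The $\dot u_1$ coefficients agree only when $c(u_1)-c(u_2)=(u_1-u_2)c'(u_1)$, i.e.\ when $c$ is affine on $[u_2,u_1]$. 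More structurally, no function $\Psi(a,b)$ can realize your identity: it would force $\partial_b\partial_a\Psi=-c'(b)$ and $\partial_a\partial_b\Psi=-c'(a)$, which are incompatible unless $c'$ is constant. This is therefore not a regularity issue that regularizing $c$ by smooth strictly increasing $c_\varepsilon$ would cure; the same obstruction persists for $c_\varepsilon$.

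The argument in \cite{vDP} actually proceeds by the approximation you mention at the end, but used differently: one replaces $c$ by $c_n$ with $c_n'\ge 1/n>0$ (as in Section~\ref{sec:mainstrategy}), so the approximating problems are strictly parabolic and the \emph{classical} maximum principle gives $u_1^n\le u_2^n$ directly, hence $c_n(u_1^n)\le c_n(u_2^n)$; passing to the limit yields $c(u_1)\le c(u_2)$. You frame approximation as a technical device to justify the chain rule, but since the chain rule itself fails, the approximation-plus-classical-comparison route is the entire proof, not a patch for a detail. If you want a genuinely direct energy argument, the test function must be changed (e.g.\ to a regularization of $\operatorname{sign}^+(u_1-u_2)$, leading to an $L^1$-contraction estimate for $c(u)$), not $(u_1-u_2)^+$.
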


If, in addition, $\partial_t c(u_1), \partial_t c(u_2) \in L^2(Q_T)$, then the comparison principle holds for $u_1$ and $u_2$. This result was proved by Alt and Luckhaus \cite[Theorem 2.2]{AL}. 

\subsection{An elliptic-parabolic free boundary problem} \label{sec:ellipticparabolic}
In the particular case of
 $$c(u)=u^+,$$ we deduce from Theorem~\ref{thm:classical} that
$u$ satisfies the elliptic-parabolic free boundary problem
\begin{equation} \label{eq:FBP}
\begin{cases}
u_{xx} = 0 & \text{ for } -1<x<r(t), \ 0<t\leq T, \\
 u_t - u_{xx} = 0 & \text{ for } r(t) < x < 1, \ 0<t\leq T,\\
u(r(t),t)=0 & \text{ for } 0<t\leq T,\\
u_x^+ (r(t), t) = u_x^- (r(t),t) & \text{ for } 0<t\leq T,
\end{cases}
\end{equation}
in the strong sense, 
where
 $$
 u_x^\pm(r(t),t) = \lim_{x\to r(t)^\pm} u_x(x,t) \quad  \text{ exists a.e. }
 $$
 Since $c$ satisfies the assumptions of \cite[Theorems~5 and 9]{vDP}, we further have
$$
u(x,t) = -\frac{f(t)}{1+ r(t)}(x+1)+ f(t) \quad \text{ for all } -1\leq x < r(t), \ 0\leq t\leq T,
$$
and $u$ satisfies the heat equation in the classical sense for all $r(t) < x < 1$ and $0<t\leq T$.

This particular case will be relevant in Section~\ref{sec:approxweak}.

\subsection{Main strategy} \label{sec:mainstrategy}
The authors in \cite{vDP,HP} prove existence of weak solutions by regularizing the data, solving a sequence of approximating problems, and obtaining uniform estimates to pass to the limit. 
More precisely, for each $n\geq 1$, there is a unique smooth solution to 
\begin{equation}\label{pb:approx}
\begin{cases}
 c_n'(u_n) \partial_t u_n - \partial_{xx} u_n = 0 & \text{ in } Q_T,\\
u_n(-1,t)= f_n(t), \ u_n(1,t)=g_n(t) & \text{ for } 0<t\leq T,\\
u_n(x,0)=u_{0n}(x) & \text{ for } -1\leq x \leq 1,
\end{cases}
\end{equation}
where $c_n$, $f_n$, $g_n$, and $u_{0n}$ are smooth functions converging uniformly to $c$, $f$, $g$, and $u_0$ in their respective domains, and $1/n \leq c_n'\leq K$ for some $K>1$.
Moreover, for all $n\geq 1$,
$$
 |u_n| \leq M \quad \text{ and } \quad |\partial_x u_n| \leq L \quad \text{ on } \overline{Q_T},
$$
where $M>0$ depends only on $\|f_n\|_\infty$, $\|g_n\|_\infty$, and $\|u_{0n}\|_\infty$, and $L>0$, depends only on $K$, $\|f_n'\|_\infty$, $\|g_n'\|_\infty$, and $\|u_{0n}'\|_\infty$. In particular, if $v_n=c_n(u_n)$, then
$$
|\partial_x v_n| = |c_n'(u_n)| |\partial_x u_n| \leq K L \quad \text{ on } \overline{Q_T}.
$$
From these uniform bounds and the uniqueness of weak solutions to \eqref{pb:main}, it follows that
$$
u_n \to u  \text{ weakly in }  L^2(0,T; H^1(-1,1)),
$$
$$
v_n \to c(u)  \text{ uniformly on } \overline{Q_T}.
$$
Passing to the limit in the weak formulation of \eqref{pb:approx}, we get $u$ is the weak solution to \eqref{pb:main}. 

\begin{rem} \label{rem:classical}
A key point in this approach is that obtaining uniform bounds on $\partial_x u_n$ requires $f$, $g$, and $u_0$ to be Lipschitz continuous. These bounds are crucial for taking the limit, and also for proving higher regularity of weak solutions and properties of the free boundary.
\end{rem}

\section{Some useful tools}

In this section, we introduce some useful notation, prove an energy estimate, and define the approximating weak solutions that will be fundamental in the following sections.

\subsection{Notation}

Given $r>0$ and $(x_0,t_0)\in \R^2$, we write 
$$Q_r(x_0,t_0)= (x_0-r,x_0+r)\times (t_0-r^2, t_0].$$
If $(x_0,t_0)=(0,0)$, we simply write $Q_r$. 

We define the saturated and unsaturated regions in $Q_r$ as 
$$Q_r^-(u):=Q_r \cap \{u<0\} \quad \text{ and } \quad Q_r^+(u):=Q_r \cap \{u>0\},$$ respectively.
We denote the zero-level set of $u$ in $Q_r$ as
$$\Gamma_r(u) :=Q_r\cap \{u=0\}.$$
When $r=1$, we will drop the subindex. 

Let $t_1<t_2$. Given $a,b \in C([t_1,t_2])$ satisfying $a(t)<b(t)$ for all $t\in [t_1,t_2]$, consider 
\begin{equation}\label{eq:domD}
D = \{ (x,t)\in \R^{2} : a(t)< x < b(t) \text{ and } t_1<t \leq t_2\}.
\end{equation}
We denote the parabolic boundary of $D$ as $\partial_p D$. The lateral boundary of $D$ is the set 
$$\partial_l D = \{ (a(t), t) : t\in (t_1,t_2]\}\cup\{ (b(t), t) : t\in (t_1,t_2]\},
$$
and the bottom of $D$ is the set  
$$\partial_b D = [ a(t_1), b(t_1) ] \times \{t_1\}.$$ 
We have that 
$
\partial_p D = \partial_l D \cup \partial_b D.
$

\subsection{Energy estimate} The following energy estimate will be useful.

\begin{lem}[Local energy estimate] \label{lem:energy}
Let $c$ be an increasing smooth function such that $c(0)=0$ and $c'\leq K$ on $\R$.
Let $u$ be a classical solution to $\partial_t c(u) - u_{xx}=0$ in $Q_1$. 
    Then 
    $$
    \sup_{-r^2<t<0} \int_{-r}^r c(u)^2  \, dx + \iint_{Q_{r}} u_x^2 \, dxdt
    \leq C \iint_{Q_1} u^2\, dxdt,
    $$
    for any $0<r\leq1/2$, where $C>0$ depends only on $K$.
\end{lem}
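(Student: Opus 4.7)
The plan is to run a standard Caccioppoli-type energy argument, but with two different test functions whose estimates have to be combined. Since $Q_r \subset Q_{1/2}$ for every $r\leq 1/2$ and the left-hand side is monotone non-decreasing in $r$, it suffices to establish the estimate at scale $r=1/2$. Fix a spatial cutoff $\eta \in C_c^\infty((-3/4,3/4))$ with $\eta \equiv 1$ on $[-1/2,1/2]$ and a temporal cutoff $\chi \in C^\infty([-1,0])$ with $\chi(-1)=0$ and $\chi \equiv 1$ on $[-1/4,0]$, and set $\zeta(x,t) = \eta(x)\chi(t)$. Both derivatives of $\zeta$ are then bounded by an absolute constant.

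First, I would test the equation against $u\zeta^2$ and integrate over $(-1,1) \times (-1,t_0)$ for arbitrary $t_0 \in [-1/4,0]$. The key identity is $u\,\partial_t c(u) = \partial_t \Phi(u)$, where
\[
\Phi(s) := \int_0^s \tau\, c'(\tau)\, d\tau \quad \text{satisfies} \quad 0 \leq \Phi(s) \leq \tfrac{K}{2}\, s^2,
\]
since $0 \leq c' \leq K$ (the non-negativity for $s<0$ follows because $\tau c'(\tau)\leq 0$ on $[s,0]$). Integrating by parts in both variables (using that $\zeta$ vanishes on the lateral boundary of $Q_1$ and at $t=-1$), absorbing the cross term $2u\zeta\zeta_x u_x$ via Young's inequality, and using the upper bound on $\Phi$ yields
\[
\iint_{Q_{1/2}} u_x^2\, dx\, dt \leq C \iint_{Q_1} u^2\, dx\, dt.
\]

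Second, I would test against $c(u)\zeta^2$ and use the identity $c(u)\,\partial_t c(u) = \tfrac{1}{2}\partial_t c(u)^2$. This produces the desired boundary term $\tfrac{1}{2}\int \eta^2 c(u(\cdot,t_0))^2\, dx$, a non-negative gradient term $\iint c'(u)\zeta^2 u_x^2$ that can be discarded, and on the right-hand side the terms $\iint \zeta\zeta_t c(u)^2$ and $2\iint c(u)\zeta\zeta_x u_x$. The first is controlled by $C\iint_{Q_1} u^2$ via $|c(u)| \leq K|u|$; the second, after Young's inequality, leaves behind a term of the form $C\iint \zeta^2 u_x^2$ which is precisely what the first step provides. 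Taking the supremum over $t_0 \in [-1/4,0]$ completes the proof.

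The main (mild) difficulty is that neither test function alone suffices. The $u\zeta^2$ test only produces the antiderivative $\Phi(u)$ as a good time-slice quantity, and $\Phi$ is not pointwise comparable to $c(u)^2$ for general $c$ (there is no lower bound on $c'$). The $c(u)\zeta^2$ test produces $c(u)^2$ as the boundary term, but the gradient on the left is weighted by $c'(u)$, which again cannot be inverted. The resolution is the cross-feeding: the $c(u)$-test needs $\iint \zeta^2 u_x^2$ for the Young step on its cross term, and this is exactly the output of the $u$-test.
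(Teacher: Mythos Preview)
Your proposal is correct and follows essentially the same two-test-function Caccioppoli argument as the paper: test first with $u\zeta^2$ to get the gradient bound, then with $c(u)\zeta^2$ to extract the sup-in-time control of $\int c(u)^2$, feeding the gradient estimate from the first step into the Young cross-term of the second. The only cosmetic differences are that you use the antiderivative $\Phi(s)=\int_0^s \tau c'(\tau)\,d\tau$ directly (the paper integrates by parts in $t$ first and uses $G(s)=\int_0^s c(\tau)\,d\tau$, which is related by $\Phi(s)=sc(s)-G(s)$), and you discard the nonnegative term $\iint c'(u)\zeta^2 u_x^2$ while the paper instead bounds it via $c'\leq K$; both are fine.
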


\begin{proof}
Multiplying the equation by $\varphi\in C_c^\infty(Q_1)$, and integrating in $Q_1$, we get
\begin{align*} 
    \iint_{Q_1} \big(\partial_t c(u) -  u_{xx} \big) \varphi \, dxdt =0.
\end{align*}
Fix $0<r\leq 1/2$. Let $\eta \in C^{\infty}_c (Q_1)$ be a space-time cut-off function such that $0\leq \eta\leq 1$, $\eta\equiv 1$ in $Q_{r}$, and $|\eta_x|+|\eta_t|\leq C$ in $Q_1$. 
Taking $\varphi = u \eta^2$, and integrating by parts in space,
\begin{align*}
  \iint_{Q_1} u_x^2 \eta^2 \, dxdt + 2\iint_{Q_1} u_x u  \eta \eta_x\, dxdt= -  \iint_{Q_1} \partial_t c(u) u \eta^2\, dxdt. 
\end{align*}
By Young's inequality, we have
\begin{align*}
    2\iint_{Q_1} u_x u  \eta \eta_x\, dxdt 
    &  \leq \frac{1}{2} \iint_{Q_1}   u_x^2 \eta^2\, dxdt +2{C^2} \iint_{Q_1} u^2 \, dxdt.
\end{align*}
Absorbing the first term on the left hand-side, we get
\begin{equation} \label{eq:bd1}
\frac{1}{2}  \iint_{Q_1}   u_x^2 \eta^2\, dxdt 
\leq  2C^2 \iint_{Q_1} u^2 \, dxdt - \iint_{Q_1} \partial_t c(u) u \eta^2 \, dxdt.
\end{equation}
It remains to bound the term with $\partial_t c(u)$. Define 
\begin{equation} \label{eq:G}
G(s) := \int_0^s c(r)\, dr.
\end{equation}
Note that $G'(s)=c(s)$. Moreover, since $|c(s)|\leq K |s|$ for all $s\in \R$, we have
$$
|G(s)|\leq \int_0^s |c(r)|\, dr \leq \frac{K}{2} s^2.
$$
Integrating by parts in time, 
\begin{align*}
- \iint_{Q_1} \partial_t c(u) u \eta^2 \, dxdt
&=  \iint_{Q_1} c(u) u_t \eta^2 \, dxdt + 2 \iint_{Q_1} c(u) u \eta \eta_t \, dxdt.
\end{align*}
The first term on the right-hand side can be written as
$$
 \iint_{Q_1} c(u)  u_t \eta^2 \, dxdt  = \iint_{Q_1} \partial_t G(u) \eta^2 \, dxdt =  - 2\iint_{Q_1} G(u) \eta \eta_t \, dxdt.
$$
Therefore, using \eqref{eq:G} and the fact that $|c(u)u|\leq Ku^2$,
\begin{equation}\label{eq:bd2}
\left |\iint_{Q_1} \partial_t c(u) u \eta^2 \, dxdt\right |
\leq  3C K \iint_{Q_1} u^2 \, dxdt. 
\end{equation}
Combining \eqref{eq:bd1} and \eqref{eq:bd2}, we see that
\begin{equation}\label{eq:bd3}
\iint_{Q_r}   u_x^2 \, dxdt \leq \iint_{Q_1}   u_x^2 \eta^2\, dxdt \leq 2C(2C + 3K) \iint_{Q_1} u^2 \, dxdt. 
\end{equation}

On the other hand, multiplying the equation by $\varphi=c(u)\eta^2$, and integrating over $(-1,1) \times (-1,t)$ with $-r^2<t<0$, we get
$$
\int_{-1}^t \int_{-1}^1 (\partial_s c(u) - u_{xx}) c(u) \eta^2 \, dxds =0.
$$
Integrating by parts in time, and using the properties of $\eta$,
\begin{align*}
\int_{-1}^t\int_{-1}^1 \partial_s c(u)  c(u) \eta^2 \, dxds 
&= \frac{1}{2}\int_{-1}^1 \int_{-1}^t \partial_s (c(u)^2)  \eta^2 \, dsdx\\
&=  \frac{1}{2} \int_{-1}^1 c(u)^2 \eta(\cdot, t)^2\, dx - \int_{-1}^t \int_{-1}^1 c(u)^2 \eta \eta_s\,dxds \\
& \geq \frac{1}{2} \int_{-r}^r c(u)^2 \, dx -  C K^2  \iint_{Q_1} u^2 \, dxds.
\end{align*}
Moreover, integrating by parts in space, using \eqref{eq:bd3}, and the properties of $c$,
\begin{align*}
- \iint_{Q_1} u_{xx}  c(u) \eta^2 \, dxds 
&= \iint_{Q_1} c'(u) u_x^2 \eta^2\, dxds + 2\iint_{Q_1} u_x c(u) \eta \eta_x \, dxds\\
& \leq K\iint_{Q_1} u_x^2 \eta^2\, dxds + C \iint_{Q_1} u_x^2 \eta^2 \, dxds +C \iint_{Q_1} c(u)^2 \,dxds\\
& \leq C_1(K) \iint_{Q_1} u^2 \, dxdt,
\end{align*}
where $C_1(K)>0$ depends only on $K$.
Therefore, for any $-r^2<t<0$, we get
$$
\int_{-r}^r c(u)^2\, dx \leq 2 ( C_1(K) + CK^2) \iint_{Q_1} u^2\, dxds.
$$
Taking the supremum in time, we obtain the result.

\end{proof}

\begin{rem}
The energy estimate also holds for weak solutions to $\partial_t c(u) - u_{xx}=0$, with $c$ satisfying $(A_1)$, following the approximation procedure in Section~\ref{sec:mainstrategy}, and taking the limit. 
\end{rem}

\subsection{Approximating weak solutions} \label{sec:approxweak}
We consider the Cauchy-Dirichlet problem in $Q_1$:
\begin{equation} \label{pb:main2}
\begin{cases}
\partial_t c(u) - u_{xx} =0 & \text{ in } Q_1,\\
u(-1,t) = f(t), \ u(1,t)= g(t) & \text{ for } -1<t \leq 0,\\
c(u(x,0)) = v_0(x) & \text{ for } -1 \leq x \leq 1. 
\end{cases}
\end{equation}
Let $0<\lambda\leq \Lambda$.  We will assume the following hypothesis on the data:
\begin{enumerate}
\item[$(H_1)$] The function $c$ is given by
$$c(u):=u^+=\max\{u,0\}.$$
\item[$(H_2)$] The functions $f$ and $g$ satisfy
$$
-\Lambda \leq f(t)\leq -\lambda \quad \text{and} \quad
\lambda \leq g(t)\leq \Lambda  \quad \text{for a.e.}~t\in  [-1,0].
$$
\item[$(H_3)$] The initial condition $v_0$ satisfies
$$
 v_0(x)=0 \ \text{ for a.e.}~x\in [-1,0]  \quad \text{ and } \quad 0<v_0(x) \leq \Lambda \ \text{ for a.e.}~x \in (0,1]
$$
\end{enumerate}

We can construct weak solutions to \eqref{pb:main2} by mollifying the data and passing to the limit. 
Indeed, let $\eta : \R \to [0,\infty)$ be a smooth function such that $\supp \eta = [-1,1]$ and $\int_\R\eta \, ds = 1$. For $0<\vep<1$, let
$\eta_\vep (s) = \tfrac{1}{\vep}\eta( \frac{s}{\vep} )$ for  $s\in \R.$
Define the smooth functions:
\begin{align*}
f_\vep(t)  := f\ast \eta_\vep (t) & \text{ and } g_\vep(t) := g\ast \eta_\vep(t)   \text{ for } t\in [-1,0],\\
  v_{0\vep}(x) &:= v_0 \ast \eta_\vep(x) \text{ for } x\in [-1,1].
\end{align*}
Clearly, $f_\vep$ and $g_\vep$ satisfy $(A_2)$ for $\vep$ small enough. By $(H_3)$ and $\supp \eta_\vep = [-\vep,\vep]$, we have 
$$
v_{0\vep}(x) = \int_{(x-\vep)^+}^{\min\{(x+\vep)^+,\, 1\}} v_0(y) \eta_\vep(x-y) \, dy \quad \text{ for } x \in [-1,1].
$$
Hence, $v_{0\vep}$ satisfies $(A_5)$ with $r_0=\vep$, i.e.,
$$
v_{0\vep}=0 \  \text{ on  } [-1,-\vep] \quad\text{ and } \quad 0<v_{0\vep}\leq \Lambda \ \text{ on } (-\vep,1],
$$ 
where the upper bound follows from $v_0 \leq \Lambda$ a.e.~and $\int_\R \eta_\vep\, ds=1$.

Let $u_{0\vep}: [-1,1] \to [-\Lambda, \Lambda]$ be a smooth function such that $u_{0\vep}^+ = v_{0\vep}$.  
We also need that
$$
u_{0\vep}(-1) = f_\vep(-1), \quad u_{0\vep}(1) = g_\vep(1), \quad \text{ and } \quad u_{0\vep}''(-1)=u_{0\vep}''(1)=0.
$$
If this is not the case, we can simply multiply $f_\vep$ and $g_\vep$ by a suitable smooth cut-off function.

By construction, we have $c$, $f_\vep$, $g_\vep$, $v_{0\vep}$, and $u_{0\vep}$ satisfy the assumptions $(A_1)-(A_5)$. By Theorem~\ref{thm:classical}, there exists a unique weak solution $u_\vep$ to \eqref{pb:main2} with data  $f_\vep$, $g_\vep$, and $v_{0\vep}$.

By the energy estimate (Lemma~\ref{lem:energy}),
$$
    \sup_{-r^2<t<0} \int_{-r}^r (u_\vep^+)^2  \, dx + \iint_{Q_{r}} (\partial_x u_\vep)^2 \, dxdt
    \leq C(\lambda, \Lambda),
    $$
    for any $0<r\leq 1/2$. Hence,
    \begin{align*}
    u_\vep \to u & \text{ weakly in } L^2(-1,0; H^1(-1,1)),\\
    u_\vep^+ \to u^+ & \text{ weakly in } C(-1,0; L^2(-1,1)).
    \end{align*}
By uniqueness, and the fact that $f_\vep \to f$, $g_\vep\to g$, and $v_{0\vep}\to v_0$ a.e., we conclude that $u$ is the weak solution to \eqref{pb:main2} with data $f$, $g$, and $v_0$. \medskip

Next, we justify that it is enough to prove our main result for the approximating solutions.

\begin{lem} \label{lem:limit}
If $u_\varepsilon$ satisfies the conclusion of Theorem~\ref{thm:main}, for all $0<\vep<1$, then the same properties also hold for the limit function $u$.
\end{lem}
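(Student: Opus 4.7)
The plan is to use compactness to transfer each of the three conclusions of Theorem~\ref{thm:main} from the approximants $u_\vep$ to $u$. I would begin by extracting a single subsequence (unrelabeled) along which the relevant convergences hold. The uniform bound $\|\partial_x u_\vep\|_{L^\infty(Q_{1/2})} \le L$ gives weak-$*$ compactness in $L^\infty(Q_{1/2})$; matching with the weak $L^2(-1,0;H^1)$ convergence $u_\vep \rightharpoonup u$ established in Section~\ref{sec:approxweak} identifies the limit as $u_x$, and lower semicontinuity yields $\|u_x\|_{L^\infty(Q_{1/2})} \le L$. The uniform bounds $[u_\vep^+]_{C^{1,1/2}_{x,t}(\overline{Q_{1/2}})} \le M$ and $\|u_\vep^+\|_\infty \le \Lambda$ together with Arzel\`a-Ascoli give uniform convergence $u_\vep^+ \to v$ on $\overline{Q_{1/2}}$; by the $C(-1,0;L^2)$ convergence of $u_\vep^+$ to $u^+$ one has $v = u^+$, so $u^+$ inherits the $C^{1,1/2}_{x,t}$ bound. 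Analogously, $[r_\vep]_{C^{1/2}([-1/4,0])} \le K$ with $r_\vep([-1/4,0]) \subseteq [-1+\delta_0,1-\delta_0]$ yields a uniform limit $r_\vep \to r$ with the same bounds. This establishes (i) and the regularity statements in (iii).

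To identify the zero set in (iii), I would use the explicit formula from Section~\ref{sec:ellipticparabolic},
\begin{equation*}
u_\vep(x, t) = f_\vep(t)\,\frac{r_\vep(t) - x}{r_\vep(t) + 1}, \qquad -1 \le x \le r_\vep(t).
\end{equation*}
Since $f_\vep \to f$ a.e.\ in $t$, $r_\vep \to r$ uniformly, and $-\Lambda \le f_\vep \le -\lambda$, testing against functions supported in the open set $\{x < r(t)\}$ and using dominated convergence together with the weak convergence of $u_\vep$ shows the same linear formula holds for $u$ a.e.\ on $\{x < r(t)\}$; in particular, $u < 0$ strictly there. On the other hand, uniform convergence in the parabolic phase combined with $u_\vep^+(r_\vep(t), t) = 0$ gives $u^+(r(t), t) = 0$, while the non-degeneracy proved below forces $u^+(x, t) > 0$ for $r(t) < x < r(t) + \rho_0$. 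Therefore $\{u = 0\} \cap Q_{1/2}$ agrees with the graph of $r$ (up to a null set of times, the ambiguity inherent in defining $u$ pointwise in the saturated region).

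For non-degeneracy (ii), fix $(x_0, t_0) \in \{u=0\} \cap Q_{1/2}$, so that $x_0 = r(t_0)$. Property (ii) applied to $u_\vep$ yields $\partial_x u_\vep \ge N$ a.e.\ in $Q_{\rho_0}(r_\vep(t_0), t_0)$. For any compact $E \subset Q_{\rho_0}(x_0, t_0)$, uniform convergence $r_\vep(t_0) \to x_0$ implies $E \subset Q_{\rho_0}(r_\vep(t_0), t_0)$ for all sufficiently small $\vep$; testing the weak-$*$ convergence $\partial_x u_\vep \overset{*}{\rightharpoonup} u_x$ against any nonnegative $\varphi \in L^1(E)$ gives $u_x \ge N$ a.e.\ on $E$, and an exhaustion of $Q_{\rho_0}(x_0, t_0)$ by such compacts concludes.

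The main subtlety is the a.e.-in-time ambiguity in the saturated phase, where $u$ may be discontinuous in time and so no pointwise convergence of $u_\vep$ is available. The explicit linear structure of $u_\vep$ in the saturated region, combined with uniform convergence of the free boundary graphs $r_\vep$, pins down $u$ on $\{x < r(t)\}$ for a.e.\ $t$ in terms of $f$ and $r$, which is what allows the zero set to be identified and all three conclusions to be transferred cleanly to $u$.
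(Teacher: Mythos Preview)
Your proposal is correct and closely parallels the paper's proof: both use Arzel\`a--Ascoli on $u_\vep^+$ and on $r_\vep$, identify the limiting zero set with the graph of $r$ via non-degeneracy of $u_\vep$ on one side and the explicit linear structure of $u_\vep^-$ on the other, and transfer the uniform estimates by compactness. There are two minor organizational differences. You obtain $\|u_x\|_{L^\infty(Q_{1/2})}\le L$ directly by weak-$*$ lower semicontinuity in $L^\infty$, whereas the paper splits into $Q_{1/2}^+(u)$ (via the $C^{1,1/2}$ convergence) and $Q_{1/2}^-(u)$ (via the explicit formula), explicitly noting that the weak $L^2H^1$ convergence alone does not suffice in the negative phase. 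And you pass to the limit in the linear formula for $u_\vep^-$ directly using $f_\vep\to f$ a.e.\ and $r_\vep\to r$ uniformly, while the paper instead first identifies $\{u=0\}$ with the limit graph and then invokes uniqueness of weak solutions to recover the same formula for $u$ in $Q_{1/2}^-(u)$.

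One cosmetic point: the phrase ``the non-degeneracy proved below'' reads as circular, since your argument for (ii) already presupposes $x_0=r(t_0)$. The fix is simply to reorder: first establish $u_x\ge N$ a.e.\ near points of the limit graph $\{x=r(t)\}$ (using non-degeneracy of $u_\vep$ and compact exhaustion, exactly as you describe), deduce from this that $u^+(x,t)\ge N(x-r(t))>0$ for $r(t)<x<r(t)+\rho_0$, and only then conclude that $\{u=0\}\cap Q_{1/2}$ coincides with the graph of $r$.
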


\begin{proof}
Let $K,L,M,N>0$ and $0<\rho_0,\delta_0<1/2$ be the universal constants given in Theorem~\ref{thm:main}.
For any $0<\vep<1$, assume that $u_\vep$ satisfies the properties $(i)$, $(ii)$, and $(iii)$.

By the $C^{1,1/2}$ estimate in $(i)$ for $u_\vep^+$, and Arzel\`a-Ascoli theorem, up to a subsequence, it follows that $u_\vep^+ \to u^+$ uniformly on $\overline{Q_{1/2}}$, and
$$
[u^+]_{C_{x,t}^{1,1/2}(\overline{Q_{1/2}})} \leq M.
$$
In particular, $u_x$ is bounded in $Q_{1/2}^+(u)$, and taking the limit in $(ii)$, we get
$$
u_x \geq N \quad \text{ in } Q_{1/2}^+(u).
$$
We need to prove a similar result in the negative phase. Namely,
$$
N \leq u_x \leq L \quad \text{ a.e. in }  Q_{1/2}^-(u).
$$
However, these bounds do not follow directly from the weak convergence of $u_\vep$ in $L^2H^1$.

On the other hand, from the above analysis and $(iii)$, we have that
$$
 \overline\Gamma_{1/2}(u_\vep) = \{ (x,t)\in \overline{Q_{1/2}} : x = r_\vep(t) \},
$$
where $r_\vep: [-1/4,0] \times [-1+\delta_0,1-\delta_0]$ with $[r_\vep]_{C^{1/2}([-1/4,0])}\leq K$.
Then there exists a compact set $\mathcal G$ on $\overline{Q_{1/2}}$ such that, up to a subsequence,
\begin{equation} \label{eq:hausdorff}
\Gamma_\vep:= \overline \Gamma_{1/2}(u_\vep) \to \mathcal{G} \quad \text{ in the Hausdorff distance}.
\end{equation}
Moreover, $\mathcal G$ is the graph of a continuous function $r: [-1/4,0]\to[-1+\delta_0,1-\delta_0]$, i.e.,
$$\mathcal G =\{(x,t) \in \overline{Q_{1/2}} : x= r(t)\},$$ and $[r]_{C^{1/2}([-1/4,0])}\leq K$ (e.g., see \cite[Lemma~9.1]{KSC}).

We claim that $\mathcal G=\overline \Gamma_{1/2}(u)$. Indeed, by \eqref{eq:hausdorff}, for all $\delta>0$, there is $\vep_0>0$ such that if $\vep\in (0,\vep_0)$, then $\Gamma_\vep \subset  \mathcal{G}^\delta$ and $\mathcal G\subset \Gamma_\vep^\delta$, where $A^\delta$ denotes the $\delta$-neighborhood of a set $A$.
Assume for the sake of contradiction that there is some $(x_0,t_0) \in \overline\Gamma_{1/2}(u)$ such that $(x_0,t_0) \notin \mathcal G$. 
Let $\delta < |x_0-r(t_0)| / 4$.
There are two cases:  If $x_0<r(t_0)$, let $\hat x_0:=x_0+\delta$. Since $\hat x_0 > x_0$, then 
 $u(\hat x_0,t_0)>0$. Also, taking $\delta$ sufficiently small, $(\hat x_0,t_0) \notin \mathcal G^\delta$, and thus, $u_\vep(\hat x_0,t_0)<0$, for all $\vep<\vep_0$, which contradicts that 
$$
0=u_\vep^+(\hat x_0,t_0)\to u^+(\hat x_0,t_0)>0 \quad \text {as } \vep\to 0.
$$  
If $x_0>r(t_0)$, then $\bar x_0:=r(t_0)+2\delta<x_0$, so $u(\bar x_0, t_0)<0$. Since $\mathcal G\subset \Gamma_\vep^\delta$, we have that $r(t_0)>r_\vep(t_0)-\delta$, for all $\vep<\vep_0$.
Taking $\delta<\rho_0/2$, by the non-degeneracy,
$$
u_\vep(\bar x_0 ,t_0) \geq u_\vep(r_\vep(t_0)+\delta,t_0)\geq N\delta, 
$$
which contradicts that
$u_\vep^+(\bar x_0,t_0)\to u^+(\bar x_0,t_0)<0$ as $\vep \to 0$.
Therefore, $\mathcal G= \overline \Gamma_{1/2}(u)$. 

Now, by uniqueness of weak solutions, we have that 
$$
u(x,t)=- \frac{f(t)}{1+r(t)}(x-r(t))+ f(t) \quad \text{ a.e. in } Q_{1/2}^-(u).
$$
Since $-\Lambda \leq f(t) \leq -\lambda$ and $-1+\delta_0\leq r(t) \leq 1-\delta_0$, we conclude that $u_x$ is bounded above and below by some positive universal constants almost everywhere in $Q_{1/2}^-(u).$

Therefore, $u$ satisfies the conclusion of Theorem~\ref{thm:main}.

\end{proof}

\section{Lipschitz regularity in space} 

In this section, we prove the following uniform Lipschitz estimate for the approximating solution $u_\vep$ defined in Section~\ref{sec:approxweak}. For simplicity, we will drop the subindex $\vep$.

\begin{thm} \label{thm:lipschitz}
There exists a universal constant $L>0$ such that
    $$|u_x| \leq L \quad \text{ a.e. in } Q_{1/2}.$$
\end{thm}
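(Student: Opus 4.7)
The plan is to establish the bound for each smooth approximating solution $u_\varepsilon$ (from Section~\ref{sec:approxweak}) uniformly in $\varepsilon$, and then pass to the limit via Lemma~\ref{lem:limit}. For notational convenience I drop the $\varepsilon$ subscript. The argument has three parts, corresponding to the three zones of $Q_{1/2}$: the saturated region, the free boundary, and the unsaturated region.

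\medskip
\noindent\textbf{Step 1 (Uniform separation).} First, I invoke the uniform separation lemma (Lemma~\ref{lem:unifsep}, flagged in the Introduction): there is a universal $\delta_0 \in (0,1/2)$ such that
\[
r(t) \in [-1+\delta_0,\,1-\delta_0] \quad\text{for every } t\in[-1/2,0].
\]
This geometric separation, which must be established from $(H_2)$--$(H_3)$ alone (e.g.\ via barriers built from the sign of the boundary data), is the crucial input.

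\medskip
\noindent\textbf{Step 2 (Negative phase and free boundary).} By Theorem~\ref{thm:classical} and Section~\ref{sec:ellipticparabolic}, for a.e.\ $t$, the function $u(\cdot,t)$ is affine on $(-1,r(t))$ with $u(-1,t)=f(t)$ and $u(r(t),t)=0$, so
\[
u(x,t)=f(t)\,\frac{r(t)-x}{1+r(t)}, \qquad u_x(x,t)=-\frac{f(t)}{1+r(t)}.
\]
Combining $(H_2)$ with Step~1 gives the universal bound $\lambda/2 \le u_x \le \Lambda/\delta_0$ in $\{u<0\}\cap Q_{1/2}$. The transmission condition $u_x^+=u_x^-$ on $\{u=0\}$ then propagates this bound to the positive side of the free boundary.

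\medskip
\noindent\textbf{Step 3 (Positive phase via maximum principle).} In $\{u>0\}$, $u$ is smooth and caloric, so $v:=u_x$ satisfies $v_t=v_{xx}$ there. I apply the parabolic maximum principle to $v$ on a subdomain
\[
D \,:=\, \{u>0\}\cap\bigl((-1,\,3/4)\times(-1/2,\,0]\bigr),
\]
chosen to lie at positive universal distance from the lateral datum $\{x=1\}$ and from the initial datum $\{t=-1\}$. The parabolic boundary of $D$ splits into three pieces:
\begin{enumerate}
\item[(a)] the piece on $\{u=0\}$, where $|v|$ is universally bounded by Step~2;
\item[(b)] the piece on $\{x=3/4\}$: by Step~1, this segment sits at universal distance $\ge \delta_0/4$ from both the free boundary and from $\{x=1\}$, so applying interior parabolic regularity to the caloric, $\Lambda$-bounded function $u$ on parabolic cylinders of size $\delta_0/8$ yields $|v(3/4,t)|\le C\Lambda/\delta_0$ universally;
\item[(c)] the piece on $\{t=-1/2\}$, handled by a local-in-time argument (below).
\end{enumerate}
Combining (a)--(c), the maximum principle gives $|v|\le L$ in $D$ with $L$ universal; together with Step~2, this yields the claimed estimate on $Q_{1/2}$.

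\medskip
\noindent\textbf{Main obstacle.} The critical difficulty is piece~(c). Because the mollified initial datum $u_{0\varepsilon}$ is only $L^\infty$-bounded, a direct bound on $v(\cdot,-1/2)$ is not available uniformly in $\varepsilon$. My plan here is the local-in-time argument indicated in the Introduction: first obtain a coarse but quantitative bound on $v(\cdot,-1/2)$ via parabolic smoothing on the slab $[-1,-1/2]$ (using interior regularity away from the free boundary together with the transmission bound of Step~2 near it), and then iterate the maximum principle on shrinking time-slabs, noting that on each iteration the free-boundary and lateral contributions are universal while the bottom contribution is inherited from the previous step. Closing the iteration should yield a self-improving bound that collapses to the universal $L$ dictated by Step~2. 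An alternative would be to compare $v$ directly with explicit time-dependent barriers constructed from a harmonic extension of the free-boundary slope, exploiting only the continuity of $r(t)$ guaranteed by Theorem~\ref{thm:classical}.
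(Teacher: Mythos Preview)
Your overall architecture---uniform separation, then the explicit affine formula in the negative phase, then a maximum principle for $u_x$ in the positive phase---is exactly the paper's. You have also correctly isolated piece~(c), the bottom of $D$, as the only real obstacle. The gap is in your proposed resolution of~(c).

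Neither of your two plans for~(c) closes. The first (``coarse pointwise bound via interior regularity away from the free boundary together with the transmission bound near it'') fails because interior parabolic estimates for $u_x$ degenerate like $d^{-1}$ as the distance $d$ to the free boundary shrinks, while the transmission condition controls $u_x$ only \emph{on} $\Gamma(u)$, not in a neighborhood; there is no overlap, so you cannot patch these into a uniform $L^\infty$ bound on the slice $\{t=-1/2\}$. The second plan (iterate the maximum principle on time-slabs) has no decay mechanism: if the bottom contributes $M_0$ and the lateral pieces contribute the universal $M$, the next slab inherits $\max(M_0,M)$ and nothing improves.

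The paper's resolution is to abandon pointwise control of the bottom entirely. The energy estimate (Lemma~\ref{lem:energy}) gives a universal bound on $\|u_x\|_{L^2(Q_{3/4})}$ depending only on $\|u\|_{L^2(Q_1)}\le C\Lambda$. Then one applies the ``local in time maximum principle'' (Lemma~\ref{lem:locmp}) to $v=u_x-M$: compare $v$ with the \emph{global} heat evolution of $v^+(\cdot,-1/2)$, which after a fixed time gap is bounded in $L^\infty$ by $C\|v^+(\cdot,-1/2)\|_{L^2}$ via the heat kernel. This converts the $L^2$ energy bound directly into the desired $L^\infty$ bound on $u_x$ for $t\ge -1/4$, with no iteration and no pointwise control of the initial slice needed. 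That $L^2\to L^\infty$ smoothing, fed by the energy estimate, is the missing ingredient in your proposal.
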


\begin{rem}
The estimate holds for all $(x,t) \in Q_{1/2}^+(u)$ since $u$ is smooth in $\{u>0\}$.
\end{rem}

This theorem will be a consequence of the following two lemmas.

\begin{lem}[Local in time maximum principle] \label{lem:locmp}
Let $D$ be a parabolic domain in $\R^{2}$ as in \eqref{eq:domD}, with $t_1=-1$ and $t_2=0$.
Let $f \in L^2(I)$, where $I=(a(-1),b(-1))$.
Let $v$ satisfy
$$
\begin{cases}
v_t - v_{xx} \leq 0 & \text{ on } D,\\
v \leq 0 & \text{ on } \partial_l D,\\
v(x,-1)=f(x) & \text{ for } x\in I.
\end{cases}
$$
Then 
$$
\sup_{D\cap\{t\geq -3/4\}} v \leq | I |^{1/2} \|f^+\|_{L^2(I)}.
$$
\end{lem}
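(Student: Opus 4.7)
The plan is to compare $v$ with the heat extension of its positive initial part to all of $\R$, and then estimate the extension using the explicit Gaussian kernel. First, I extend $f^+$ by zero outside $I$ and let $w$ be the bounded solution of $w_t - w_{xx} = 0$ on $\R \times (-1, 0]$ with $w(\cdot, -1) = f^+ \chi_I$, given by
\[
w(x,t) = \int_I \frac{1}{\sqrt{4\pi(t+1)}} \, e^{-(x-y)^2/(4(t+1))} f^+(y)\, dy.
\]
By construction $w \geq 0$ on $\R\times(-1,0]$ and $w$ is smooth for $t > -1$.

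Next, I use the parabolic maximum principle on $D$ to show $v \leq w$. On $\partial_l D$, $v \leq 0 \leq w$. On $\partial_b D = I \times \{-1\}$, $v(x, -1) = f(x) \leq f^+(x) = w(x, -1)$. The difference $w - v$ is super-caloric on $D$ and nonnegative on $\partial_p D$, so the classical maximum principle yields $v \leq w$ throughout $D$. (Minor technical point: since $f$ is only in $L^2(I)$, the initial trace has to be interpreted in the $L^2$-sense; this can be handled by applying the comparison on $D \cap \{t > -1 + \delta\}$ with initial data $w(\cdot, -1+\delta) - v(\cdot, -1+\delta)$, noting that both traces converge in $L^2(I)$ as $\delta \to 0$, and then letting $\delta \to 0$.)

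Finally, I estimate $w$ for $t \geq -3/4$, where $t+1 \geq 1/4$. By Cauchy--Schwarz, using that $f^+$ is supported in $I$,
\[
w(x,t) \leq \frac{1}{\sqrt{4\pi(t+1)}} \int_I f^+(y)\, dy \leq \frac{|I|^{1/2}}{\sqrt{4\pi(t+1)}}\, \|f^+\|_{L^2(I)} \leq \frac{|I|^{1/2}}{\sqrt{\pi}}\, \|f^+\|_{L^2(I)},
\]
and since $1/\sqrt{\pi} < 1$, the claimed bound follows.

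The only mildly subtle step is the comparison argument: one must be careful that $D$ has only a continuous (not Lipschitz) lateral boundary and that the initial datum is only $L^2$, but both issues are standard and handled by the slight time shift described above. Everything else is a direct estimate of the heat kernel.
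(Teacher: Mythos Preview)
Your proof is correct and follows essentially the same approach as the paper: both compare $v$ with the global heat extension of $f^+\chi_I$, use the maximum principle on $D$, and then bound the extension via Cauchy--Schwarz together with $1/\sqrt{4\pi(t+1)}\leq 1/\sqrt{\pi}<1$ for $t\geq -3/4$. Your added remark about handling the $L^2$ initial data by a time-shift is a welcome clarification that the paper leaves implicit.
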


\begin{proof}
Consider the global solution
$$
\begin{cases}
\Phi_t - \Phi_{xx}=0 & \hbox{in}~\R\times (-1,\infty),\\
\Phi(x,-1)= f^+ & \text{for } x\in \R.
\end{cases}
$$
For $(x,t)\in \R\times(-1,\infty)$, the function $\Phi$ is given by
$$
\Phi(x,t)= \frac{1}{\sqrt{4\pi(t+1)}} \int_I  e^{-\frac{|x-y|^2}{4(t+1)}} f^+(y)\, dy.
$$
Note that if $t\geq -3/4$, then $\frac{1}{\sqrt{4\pi(t+1)}}\leq \frac{1}{\sqrt\pi}\leq 1$, and by the Cauchy-Schwarz inequality,
$$
\Phi(x,t) \leq | I | ^{1/2}\|f^+\|_{L^2(I)}.
$$
By construction, we have that $\Phi>0\geq v$ on $\partial_l D$ and $\Phi(x,0)\geq v(x,0)$ for a.e.~$x\in I$. By the maximum principle for the heat equation, it follows that $\Phi\geq v$ in $D$. Therefore,
$$
\sup_{D\cap\{t\geq -3/4\}} v \leq | I | ^{1/2}  \|f^+\|_{L^2(I)}.
$$
\end{proof}

Next, we prove that the free boundary is uniformly away from the lateral boundary of $Q_1$. This result will be crucial to obtain universal quantitative estimates.

\begin{lem}[Uniform separation]
\label{lem:unifsep}
There are universal constants $\delta_1, \delta_2\in (0,1/2)$ such that
    $$
    -1 + \delta_1 < r(t) < 1-\delta_2 \quad \text{for all } t \in [-1/2, 0].
    $$
\end{lem}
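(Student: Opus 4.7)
The strategy is to trap $r$ between the free boundaries of two extremal solutions. I would introduce $\Phi$ and $\Psi$, the weak solutions to \eqref{pb:main2} with the constant data $(\tilde f, \tilde g, \tilde v_0) = (-\lambda, \Lambda, \Lambda)$ and $(-\Lambda, \lambda, 0)$ respectively, built as limits of the approximation scheme of Section~\ref{sec:approxweak}. Since these triples bracket $(f, g, v_0)$ for $u$ in the sense required by Theorem~\ref{thm:comparison}, one obtains $c(\Psi) \leq c(u) \leq c(\Phi)$, and consequently $r_\Phi(t) \leq r(t) \leq r_\Psi(t)$. The lemma therefore reduces to the quantitative separation of $r_\Phi, r_\Psi$ from the endpoints $-1$ and $1$.

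The key is to combine monotonicity with an integral inequality. First, comparing $\Phi$ with its time-translates $(x,t) \mapsto \Phi(x, t+h)$ and using $\Phi^+(\cdot, -1+h) \leq \Lambda = \tilde v_0$, the comparison principle forces $\Phi^+(\cdot, t+h) \leq \Phi^+(\cdot, t)$, so the positive phase of $\Phi$ shrinks and $r_\Phi$ is nondecreasing; the symmetric argument (with $\Psi^+(\cdot, -1+h) \geq 0 = \tilde v_0$) shows that $r_\Psi$ is nonincreasing. Second, since $\Phi$ is affine in $x$ on its negative phase with slope $\lambda/(1+r_\Phi(t))$, one has $\int_{-1}^{r_\Phi(t)} \Phi_x^2 \, dx = \lambda^2/(1+r_\Phi(t))$, while the identity $\int_{r_\Psi(t)}^1 \Psi_x \, dx = \lambda$ combined with Cauchy--Schwarz gives $\int_{r_\Psi(t)}^1 \Psi_x^2 \, dx \geq \lambda^2/(1-r_\Psi(t))$. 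Integrating in $t$ and pairing with a global energy bound $\iint_{Q_1} (\Phi_x^2 + \Psi_x^2) \, dx \, dt \leq C(\lambda,\Lambda)$ shows that $(1+r_\Phi)^{-1}$ and $(1-r_\Psi)^{-1}$ are in $L^1(-1,0)$; monotonicity on $[-1,-1/2]$ then converts this integral control into pointwise bounds $1+r_\Phi(-1/2) \geq \lambda^2/(2C)$ and $1-r_\Psi(-1/2) \geq \lambda^2/(2C)$, which propagate to all $t\in[-1/2,0]$ by monotonicity.

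The main obstacle is establishing the global energy estimate, since Lemma~\ref{lem:energy} is purely interior and the linear structure of $\Phi$ we want to exploit reaches all the way to $x=-1$. To supply it, I would multiply $\partial_t c(\Phi) - \Phi_{xx} = 0$ by $\Phi - L$, where $L(x) = (\lambda+\Lambda)(x+1)/2 - \lambda$ is the time-independent affine interpolant of the Dirichlet data, and integrate over $Q_1$: the vanishing of $\Phi - L$ on $\{x = \pm 1\}$ eliminates boundary contributions in the spatial integration by parts, while the time-derivative term telescopes to $\int_{-1}^{1}[H(\Phi(x,0),x) - H(\Lambda, x)] \, dx$ with $H(s,x) = \int_0^s (\tau - L(x)) c'(\tau) \, d\tau$, bounded in terms of $\|\Phi\|_\infty \leq \Lambda$; this yields $\iint \Phi_x^2 \leq C(\lambda,\Lambda)$, and the analogous argument controls $\iint \Psi_x^2$. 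Since all estimates are uniform in the approximation parameter $\varepsilon$, the bounds on the free boundary pass to the limit without additional work, and one sets $\delta_1 = \delta_2 = \min(\lambda^2/(2C), \, 1/2)$.
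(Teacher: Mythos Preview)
Your route is genuinely different from the paper's. The paper builds explicit classical barriers for \eqref{eq:FBP}: for the lower bound a supersolution with prescribed interface $\bar r_\vep(t)=-1+\vep(t+3/4)^2$, checking $\bar u_x^-\geq \bar u_x^+$ via $C^{1,1}$ boundary estimates; for the upper bound a traveling-wave subsolution with linear interface $\hat r_\vep(t)=1-\vep(t+1)$. One application of the comparison principle then pins $r$ between $\bar r_\vep$ and $\hat r_\vep$. Your argument is softer: sandwich by extremal constant-data solutions, use time-translate comparison to get monotonicity of their interfaces, and convert the global energy bound $\iint\Phi_x^2\leq C$ into a pointwise bound on $r_\Phi$ through the affine structure in the elliptic phase. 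The barrier method is self-contained within the paper's setup and gives explicit constants; your method avoids ad hoc constructions and would generalize more readily, but it leans on properties of $\Phi,\Psi$ that are not free.

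That is where the gap sits. Neither $\Phi$ nor $\Psi$ fits the frameworks you invoke. The scheme of Section~\ref{sec:approxweak} requires $(H_3)$, which $\tilde v_0\equiv\Lambda$ and $\tilde v_0\equiv 0$ both violate, and Theorem~\ref{thm:classical} requires the corner compatibility in $(A_3)$: for $\Phi$ one would need a Lipschitz $u_0$ with $u_0^+\equiv\Lambda$ and $u_0(-1)=-\lambda$, which is impossible, and the analogous obstruction occurs for $\Psi$ at $x=1$. Consequently the graph structure in Theorem~\ref{thm:classical}(iii), which your energy computation on the elliptic phase of $\Phi$ relies on, is not available as stated. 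This is repairable---for instance replace $\tilde v_0=\Lambda$ by $\Lambda\chi_{[-1+\eta,1]}$ with $\eta$ small so that $(A_3)$--$(A_5)$ hold and the comparison with $u$ persists (using $v_0=0$ on $[-1,0]$)---but then the monotonicity step needs care: you must know $\Phi^+(\cdot,-1+h)\leq \Lambda\chi_{[-1+\eta,1]}$, i.e.\ that the interface never drops below $-1+\eta$, and this is exactly the kind of statement you are trying to prove. One way out is to first show, by a direct barrier or by comparison with the stationary affine solution, that $r_\Phi(t)\geq -1+\eta$ for all $t$, and only then run your time-translate argument; but this extra step should be made explicit.
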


\begin{proof}
    We will establish the lower bound by constructing an appropriate classical supersolution to the elliptic-parabolic free boundary problem \eqref{eq:FBP}, and applying the comparison principle. Similarly, we will construct a subsolution to obtain the upper bound.\medskip

    \noindent \textbf{Step 1.} \textit{Lower bound.}  Fix $0<\vep <1$. For $t\in [-3/4,0]$, consider the function
     \begin{equation} \label{eq:supinterface}
        \br_\vep(t) := -1+ \vep \left(t + \tfrac{3}{4}\right)^2.
    \end{equation}
    Note that $\bar r_\vep (t)$ satisfies the following conditions: 
    \begin{enumerate}[$(i)$]
        \item $\bar r_\vep (-3/4)=-1$, $\bar r_\vep' (-3/4)=0$, and $[\br_\vep]_{C^{1,1}([-3/4,0])}\leq 2$;
        \item  $-1<\bar r_\vep(t) <0$, for all $t\in [-3/4,0]$, and  $\br_\vep(t)\to -1$ as $\vep\to 0$.
    \end{enumerate}
    We define the elliptic and parabolic domains as
    \begin{align*}
        \mathcal{E}_\vep & := \{ (x,t) : -1<x<\bar{r}_\vep(t) \text{ and }  -3/4<t\leq 0\},\\ 
        \mathcal{P}_\vep  & := (-1,1)\times (-1, -3/4] \cup \{ (x,t): \bar{r}_\vep(t) < x <1 \text{ and } -3/4<t\leq 0\},
    \end{align*}
    respectively.
    Let $\bu$ (depending on $\vep$) be the classical solution to
    $$
    \begin{cases}
        \bu_{xx} = 0 & \hbox{in}~\mathcal{E}_\vep,\\
        \bu_t - \bu_{xx} = 0 & \hbox{in}~\mathcal{P}_\vep,\\
        \bu (\br_\vep(t),t)=0 & \hbox{for}~t\in [-3/4,0],\\
        \bu(-1,t) = \bar f(t) & \hbox{for}~ t\in [-1,0], \\
        \bu (1, t)=\Lambda & \hbox{for}~ t\in [-1,0],\\
        \bu (x,0) = \Lambda & \hbox{for}~x\in[-1,1],
    \end{cases}
    $$
    where $\bar f$ is a smooth interpolation between $-\lambda$ and $\Lambda$, vanishing only at $t=-3/4$, and
    \begin{enumerate}[(a)]
        \item $0 < \bar f(t) \leq \Lambda$ for all $t\in [-1,-3/4)$;
    \item $ -\lambda \leq \bar f(t)= -\tfrac{16\lambda}{9}(t+3/4)^2<0$ for all $t \in (-3/4, 0]$.
    \end{enumerate}
    By the strong maximum principle, we have that
    $$-\lambda < \bu < 0 \ \text{ on }  \mathcal{E}_\vep \quad \text{ and } \quad 0< \bu < \Lambda \ \text{ on } \mathcal{P}_\vep.$$ 
    We will see that $\bar u$ is a classical supersolution to \eqref{eq:FBP}. It remains to check that
    \begin{equation}\label{eq:supersol}
    \bu_x^- (\bar r_\vep(t), t ) \geq \bu_x^+ (\bar r_\vep(t), t ) \quad \hbox{for all}~t\in (-3/4, 0].
    \end{equation}
    On the elliptic domain, $\bu$ is given by
    $$
    \bu^-(x,t) = -\frac{\bar f(t)}{1+ \br_\vep(t)}(x+1)+ \bar f(t) \qquad \hbox{for}~(x,t)\in {\mathcal{E}_\vep}.
    $$
    Therefore, for all $t\in (-3/4,0]$, using $(b)$, we have
    $$
    \bu_x^-(\br_\vep(t),t) =-\frac{\bar f(t)}{1+ \br_\vep(t)} = - \frac{\bar f(t)}{\vep(t+3/4)^2} =  \frac{16\lambda}{9 \vep}.
    $$
Hence, it suffices to see that $\bu_x^+(\br_\vep(t),t)$ is bounded for $t\in [-3/4,0]$, independently of $\vep$. Indeed, by boundary regularity estimates for the heat equation in $C^{1,1}$ domains \cite[Theorem~4.27]{L}, 
    $$
    \sup_{t\in [-3/4, 0]} |\bu^+_x(\br_\vep(t),t)| \leq C_0 \|\bar u\|_{L^\infty(\mathcal{P}_\vep)} \leq C_0 \Lambda,
    $$
    where $C_0>0$ depends only on the $C^{1,1}$ norm of $\bar r_\vep$, which is independent of $\vep$ by $(i)$.   
Therefore, \eqref{eq:supersol} follows choosing $\vep>0$ small satisfying that
\begin{equation} \label{eq:lambda}
\frac{16\lambda}{9 \vep}  = 2 C_0 \Lambda.
\end{equation}
By construction, $\bar u \geq u$~a.e.~on $\partial_p Q_1$. Hence, by the comparison principle (Theorem~\ref{thm:comparison}), we have $c(\bu)=\bar u^+ \geq u^+=c(u)$~in $Q_1$.
Therefore, by \eqref{eq:supinterface} and \eqref{eq:lambda}, we get
$$
r (t) \geq  \br_\vep (t) = -1+ \vep (t + 3/4)^2  \geq -1 + \tfrac{\lambda}{18 C_0\Lambda} \quad \text{for all } t\in [-1/2,0].
$$

\medskip

     \noindent \textbf{Step 2.} \textit{Upper bound.} 
     Fix $0<\vep<1$.
     For $t\in[-1,0]$, let $\ur_\vep (t)$ be the linear function
     $$
    \ur_\vep (t) := 1 - \vep (t+1).
    $$
    Note that $0< 1-\vep \leq \ur_\vep (t)\leq 1$.
     We now define the elliptic and parabolic domains as
     \begin{align*}
        \E_\vep & := \{ (x,t): -1 < x < \ur_\vep(t)  \ \text{ and }  \ -1< t\leq 0\},\\
         \p_\vep & := \{(x,t) : \ur_\vep(t) < x < 1 \ \text{ and } \ -1<t\leq 0\},
     \end{align*}
     respectively.
     For $(x,t)\in \overline \E_\vep$, consider the harmonic function
    $$
    \uu(x,t)= \frac{\Lambda}{1+\ur_\vep(t)}(x+1) -\Lambda. 
    $$
    Note that $\uu<0$ in $\E_\vep$ and $\uu(\ur_\vep(t),t)=0$ for all $t\in [-1,0]$. Moreover,
    \begin{equation} \label{eq:subsol1}
            \uu_x^-(\ur_\vep(t),t) = \frac{\Lambda}{1+\ur_\vep(t)} \leq \Lambda \qquad \text{for all } t\in [-1,0].
    \end{equation}
    Fix $K\geq 1$ to be determined, and let $x_0 = 1 - \tfrac{\ln K}{\vep}.$
    For $(x,t)\in \overline{\p}_\vep$, consider a traveling wave solution to the heat equation given by
    $$
    \uu(x,t) = e^{\vep(x-x_0)+\vep^2 (t+1)}-K.
     $$
      Note that $\uu>0$ in $\p_\vep$ and $\uu^+(\ur_\vep(t),t)=0$ for all $t\in [-1,0]$. Moreover,
      \begin{equation} \label{eq:subsol2}
                  \uu_x^+(\ur_\vep(t),t) = \vep K \qquad \text{for all } t\in [-1,0].
      \end{equation}
    We will choose $K\geq 1$ such that $\uu^+(1,t) \leq \lambda$ for all $t\in [-1,0]$. More precisely, we need
    $$
     \uu^+(1,t) = (e^{\vep^2 (t+1)}-1)K
     \leq \lambda \quad \text{for all } t\in [-1,0].
    $$  
    Indeed,  taking $\vep$ smaller if necessary, we can choose $K\geq 1$ such that 
    \begin{equation} \label{eq:boundK}
            K \leq \frac{\lambda}{e^{\vep^2}-1}.
    \end{equation}  
      By construction, $\uu$ (depending on $\vep$) satisfies the elliptic-parabolic problem
    $$
    \begin{cases}
        \uu_{xx}=0 & \hbox{in}~\E_\vep,\\
        \uu_t-\uu_{xx}=0 & \hbox{in}~\p_\vep,\\
        \uu(\ur_\vep(t),t)=0 & \hbox{for}~t\in [-1,0],\\
        \uu(-1,t) = -\Lambda & \hbox{for}~t\in [-1,0],\\
        \uu(1, t)= \uu^+(1,t) & \hbox{for}~t\in [-1,0],\\
        \uu(x,0)= \uu^-(x,0) & \hbox{for}~x\in[-1,1].
    \end{cases}
    $$
We will see that $\uu$ is a classical subsolution to \eqref{eq:FBP}, provided $\vep$ is sufficiently small. It remains to check that
    \begin{equation}\label{eq:subsol}
    \uu_x^- (\ur_\vep(t), t ) \leq \uu_x^+ (\ur_\vep(t), t ) \quad \hbox{for all}~t\in (-1, 0].
    \end{equation} 
   Indeed, by \eqref{eq:subsol1} and \eqref{eq:subsol2}, the above inequality is satisfied if $\Lambda \leq \vep K$. Given the upper bound for $K$ in \eqref{eq:boundK}, this is possible since
   $$
    \frac{\Lambda}{\lambda} \leq \frac{\vep}{e^{\vep^2}-1} \to \infty\quad \text{ as } \vep \to 0.
   $$
Hence, choosing $\vep$ sufficiently small, depending only on $\lambda$ and $\Lambda$, we deduce \eqref{eq:subsol}. 
   
   Finally, we have that $\uu \leq u$ a.e. on $\partial_p Q_1$. By the comparison principle (Theorem~\ref{thm:comparison}), it follows that $c(\uu)=\uu^+\leq u^+=c(u)$ in $Q_1$, and thus, 
   $$
    r(t) \leq \ur_\vep(t)= 1-\vep( t+1) \leq 1-\vep/2 \quad \text{for all } t\in [-1/2,0].
    $$    
\end{proof}

We are ready to prove the main theorem of this section.

\begin{proof}[Proof of Theorem~\ref{thm:lipschitz}]
  Since $u$ is an approximating weak solution, by Theorem~\ref{thm:classical} and Section~\ref{sec:ellipticparabolic}, there is a continuous function $r(t)$ such that $\Gamma(u)=\{x=r(t)\}$. Moreover, 
$$
u(x,t) =- \frac{f(t)}{1+r(t)}(x+1) + f(t) \quad \text{ in } Q_1^-(u),
$$
and $u$ satisfies the free boundary problem
$$
\begin{cases}
u_t - u_{xx} = 0 & \text{ in } Q_1^+(u),\\
u_x^+ = u_x^- & \text{ on } \Gamma(u),
\end{cases}
$$
where the free boundary condition is satisfied almost everywhere.
In particular, we have
   $$
   u_x^-(r(t),t)= -\frac{f(t)}{1+r(t)} \quad \text{ for } t\in [-1,0].
    $$
    By Lemma~\ref{lem:unifsep}, there exist universal constants $\delta_1, \delta_2 \in (0, 1/2)$ such that
    $$
    -1+ \delta_1 \leq r(t) \leq 1- \delta_2 \quad\text{for all } t\in [-1/2,0].
    $$
   Since $-\Lambda\leq f(t) \leq -\lambda$, then
   $$
    |u_x^+(r(t),t)| \leq \frac{\Lambda}{\delta_1}  \quad \text{for a.e.}~t\in [-1/2,0].
   $$
   Moreover, by interior regularity estimates for the heat equation, we have that
   $$
    \sup_{t\in [-1/2,0]} |u_x(1-\delta_2,t)| \leq \frac{C}{\delta_2} \|u\|_{L^2(Q_1)}\leq \frac{C \Lambda}{\delta_2}.
   $$
Let $\Gamma_1 = \Gamma(u) \cap \{t\geq -1/2\}$ and $\Gamma_2 =  \{ 1- \delta_2\} \times [-1/2,0]$. Let $D$ be the parabolic domain  between the free boundary portion $\Gamma_1$ and the segment $\Gamma_2$.
   Set $M=\sup_{\Gamma_1\cup \Gamma_2} u_x$. Note that $M \leq \Lambda\max\{1/\delta_1, C/\delta_2\}$.
   By Lemma~\ref{lem:locmp}, applied to $v=u_x-M$, we get
   $$
    \sup_{{Q_{1/2}^+(u)}} u_x 
    \leq C\big( \|u_x\|_{L^2({Q_{3/4})}} + M\big),
   $$
   where $C>0$ is a universal constant.  By the energy estimate (Lemma~\ref{lem:energy}),
    $$
    \|u_x\|_{L^2(Q_{3/4})} \leq C \|u\|_{L^2(Q_1)}\leq C\Lambda.
    $$
    Therefore, there is a universal constant $L>0$ such that
    $$
 u_x \leq L \quad \text{ a.e. in } Q_{1/2}. 
    $$
    The lower bound follows arguing similarly.
   \end{proof}

\subsection{Improved time regularity in unsaturated domain}
In the parabolic side, the Lipschitz regularity in space implies the $C^{1/2}$ regularity in time up to the free boundary. 

\begin{thm}[$C^{1/2}$-time regularity] \label{thm:timereg}

 There is a universal constant $C>0$ such that
    $$
    |u(x,t_1)-u(x,t_2)| \leq C |t_1-t_2|^{1/2},     
    $$
    for all $(x,t_1), (x,t_2) \in \overline{Q_{1/2}^+(u)}.$
\end{thm}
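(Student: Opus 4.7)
The plan is to integrate the PDE $\partial_t c(u) = u_{xx}$ over a parabolic rectangle of spatial width $2\sqrt{\tau}$ around $(x_0, t_0)$, control the resulting spatial boundary flux using the bound $|u_x| \leq L$ from Theorem~\ref{thm:lipschitz}, and convert the resulting averaged-in-$x$ bound into a pointwise bound using the Lipschitz-in-space regularity of $u^+$ itself.

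Fix two points $(x_0, t_0), (x_0, t_1) \in \overline{Q_{1/2}^+(u)}$ with $t_0 > t_1$, and set $\tau := t_0 - t_1$. On $\overline{Q_{1/2}^+(u)}$ we have $0 \leq u = u^+ \leq \Lambda$, so in the range $\tau \geq 1/16$ the bound $|u(x_0, t_0) - u(x_0, t_1)| \leq 2\Lambda \leq 8\Lambda\sqrt{\tau}$ is immediate, and we may henceforth assume $\tau < 1/16$. Pick an interval $[a, b] \subseteq [-1/2, 1/2]$ of length $b - a = 2\sqrt{\tau}$ containing $x_0$, which is possible because $\sqrt{\tau} < 1/4$. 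For the smooth approximating solution, $\partial_t c(u) = u_{xx}$ holds classically off the free boundary $\Gamma(u)$, and by the transmission condition from Section~\ref{sec:ellipticparabolic} the derivative $u_x$ is continuous across $\Gamma(u)$; Fubini and the fundamental theorem of calculus therefore give
$$\int_a^b [u^+(x, t_0) - u^+(x, t_1)]\, dx = \int_{t_1}^{t_0}[u_x(b, s) - u_x(a, s)]\, ds.$$
By Theorem~\ref{thm:lipschitz} the right-hand side is bounded by $2L\tau$ in absolute value, so dividing by $b - a = 2\sqrt{\tau}$ bounds the difference of spatial averages by $L\sqrt{\tau}$. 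Since $u^+$ is also $L$-Lipschitz in $x$, each pointwise value $u^+(x_0, t_i)$ differs from its spatial average over $[a,b]$ by at most $L(b-a)/2 = L\sqrt{\tau}$, and the triangle inequality produces
$$|u(x_0, t_0) - u(x_0, t_1)| = |u^+(x_0, t_0) - u^+(x_0, t_1)| \leq 3L\sqrt{\tau},$$
giving the claim with $C := \max(3L, 8\Lambda)$.

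The only point requiring any care is the justification of the integrated identity displayed above: for the approximating solution one has $\partial_t c(u), u_{xx} \in L^2(Q_1)$ by Theorem~\ref{thm:classical}, and $u_x$ extends continuously across the free boundary $\Gamma(u)$, so Fubini and the fundamental theorem of calculus apply without obstruction. The shortness of the argument reflects a general principle: Lipschitz-in-space control combined with the conservation-law form of the PDE automatically yields $C^{1/2}$-in-time control through this integration, with no need for caloric barriers or heat-kernel extensions.
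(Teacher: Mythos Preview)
Your proof is correct and takes a genuinely different route from the paper. The paper argues by comparison with global caloric barriers: it sandwiches $u(\cdot,t_1)$ between $u(x,t_1)\pm M|y-x|$, evolves these cones under the heat equation on all of $\R$, and invokes an explicit heat-kernel computation (Lemma~\ref{lem:global}) to see that the cone tip moves by $\frac{2M}{\sqrt{\pi}}\sqrt{t_2-t_1}$. Your argument instead exploits the conservation-law structure $\partial_t u^+ = u_{xx}$ directly: integrating over a box of width $2\sqrt{\tau}$ converts the time increment of the spatial average into a flux term bounded by $2L\tau$, and the Lipschitz-in-$x$ control closes the gap between average and pointwise values. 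Your approach is more elementary---no heat kernel, no auxiliary lemma---and more robust, since it uses only the divergence form of the equation and the $L^\infty$ bound on the flux $u_x$; it would work verbatim for $\partial_t c(u)=u_{xx}$ with any Lipschitz $c$. The paper's barrier method, on the other hand, uses the maximum principle in the parabolic phase and is closer in spirit to the other comparison arguments in the paper (Lemmas~\ref{lem:locmp} and~\ref{lem:unifsep}), which may explain the authors' preference.
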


We need the following lemma.

\begin{lem}\label{lem:global}
Let $t_0, x_0, A, B \in \R$. Let $\Phi$ be the global solution to $\Phi_t - \Phi_{xx}=0$ in $\R\times (t_0,\infty)$ with $\Phi(x,t_0)= A + B |x-x_0|$ for $x\in \R$. Then
    $$
    \Phi(x_0,t)= A + \frac{2B}{\sqrt{\pi}} \sqrt{t-t_0} \qquad \text{for all } t>t_0.
    $$
\end{lem}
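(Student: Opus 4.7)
My plan is to write $\Phi$ explicitly via the heat kernel convolution and then compute the value at $x=x_0$ directly. By linearity and the fact that the constant $A$ is itself a solution with constant initial data, we can peel off the $A$: writing $\Phi = A + \Psi$, where $\Psi$ solves the heat equation with initial data $\Psi(x,t_0) = B|x-x_0|$, reduces the problem to showing
\[
\Psi(x_0,t) = \frac{2B}{\sqrt{\pi}}\sqrt{t-t_0}.
\]

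Next I would represent $\Psi$ via the Gauss--Weierstrass formula
\[
\Psi(x,t) = \frac{B}{\sqrt{4\pi(t-t_0)}} \int_{\R} e^{-\frac{(x-y)^2}{4(t-t_0)}} |y - x_0|\, dy.
\]
This is justified because the initial data has at most linear growth, so the heat kernel convolution converges and produces the unique tempered solution. Evaluating at $x = x_0$ and substituting $z = y - x_0$ turns the integrand into an even function in $z$:
\[
\Psi(x_0,t) = \frac{B}{\sqrt{4\pi(t-t_0)}} \int_{\R} e^{-\frac{z^2}{4(t-t_0)}} |z|\, dz = \frac{2B}{\sqrt{4\pi(t-t_0)}} \int_0^\infty z\, e^{-\frac{z^2}{4(t-t_0)}}\, dz.
\]
The remaining integral is elementary: with $s = t-t_0$, the antiderivative of $z\,e^{-z^2/(4s)}$ is $-2s\,e^{-z^2/(4s)}$, so the integral equals $2s = 2(t-t_0)$. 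Plugging this back gives
\[
\Psi(x_0,t) = \frac{2B \cdot 2(t-t_0)}{\sqrt{4\pi(t-t_0)}} = \frac{2B}{\sqrt{\pi}}\sqrt{t-t_0},
\]
which combined with the $A$ contribution yields the claim.

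There is no real obstacle here; the only point that requires a line of comment is the uniqueness/well-posedness of the global solution with the linearly growing datum $A + B|x - x_0|$, so that we are entitled to identify $\Phi$ with the heat-kernel convolution. A standard way to handle this is to note that $A + B|x-x_0|$ grows at most linearly, hence lies in Tychonov's uniqueness class $|\Phi(x,t)| \le C e^{a|x|^2}$, so the heat-kernel formula gives the unique solution with this initial trace. Alternatively, one can observe that the solution must be self-similar after subtracting $A$: the rescaling $(x,t) \mapsto (x_0 + \sigma(x-x_0), t_0 + \sigma^2(t-t_0))$ preserves both the heat equation and the initial data $B|x-x_0|$ up to the factor $\sigma$, forcing $\Psi(x,t) = B\sqrt{t-t_0}\,g\bigl((x-x_0)/\sqrt{t-t_0}\bigr)$ for a universal even profile $g$, and then $\Psi(x_0,t) = B\sqrt{t-t_0}\, g(0)$, with $g(0) = 2/\sqrt{\pi}$ recovered from the kernel computation above.
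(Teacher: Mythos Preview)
Your proof is correct and follows essentially the same route as the paper: represent $\Phi$ via the heat-kernel convolution, evaluate at $x=x_0$, exploit evenness to reduce to $\int_0^\infty z\,e^{-z^2/(4(t-t_0))}\,dz$, and compute this Gaussian integral explicitly. The only cosmetic difference is that you peel off the constant $A$ before writing the convolution, whereas the paper carries $A$ through the integral; your added remark on Tychonov uniqueness is a welcome clarification not spelled out in the original.
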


\begin{proof}
Since $\Phi$ satisfies the heat equation in $\R\times (t_0,\infty)$, and the initial data grows linearly, we can write $\Phi$ as the convolution of the heat kernel and $\Phi(x,t_0)= A + B |x-x_0|$, i.e.,
$$
\Phi(x,t)=  \frac{1}{\sqrt{4\pi (t-t_0)}} \int_{\R} e^{-\tfrac{(y-x)^2}{4(t-t_0)}}( A + B |y-x_0| )\, dy,
$$
for all $(x,t)\in \R\times(t_0,\infty).$
Hence, for $x=x_0$ and $t>t_0$, it follows that
\begin{align*}
    \Phi(x_0,t) &= A+ \frac{B}{\sqrt{4\pi (t-t_0)}} \int_{\R} e^{-\tfrac{(y-x_0)^2}{4(t-t_0)}} |y-x_0|\, dy \\
    &= A+ \frac{B}{\sqrt{\pi(t-t_0)}} \int_0^\infty  e^{-\tfrac{(y-x_0)^2}{4(t-t_0)}}|y-x_0|\, dy\\
    &= A+ \frac{2B(t-t_0)}{\sqrt{\pi (t-t_0)}} \int_0^\infty e^{-z} \, dz\\
    &= A+ \frac{2B}{\sqrt{\pi}} \sqrt{t-t_0}.
\end{align*}    
\end{proof}

\begin{proof}[Proof of Theorem~\ref{thm:timereg}]
Let $(x,t_1), (x,t_2) \in \overline{Q_{1/2}^+(u)}$ with $t_1<t_2$. By Theorem~\ref{thm:lipschitz} (rescaled),
$$
 u(x,t_1) - L |x-y| \leq u(y,t_1) \leq u(x,t_1) + L |x-y|,
$$
for all $(y,t_1)\in {Q_{3/4}^+(u)}$.  By $(H_2)$ and $(H_3)$, we have $0\leq u \leq \Lambda$ in $Q_1^+(u)$.

Fix $M\geq L$ to be chosen.
Consider $\Phi : \R\times [t_1,\infty)\to \R$ the global solution
$$
\begin{cases}
\Phi_t - \Phi_{yy} = 0 & \hbox{in}~\R \times (t_1,\infty),\\
\Phi(y,t_1)=u(x,t_1)+ M |y-x| & \hbox{for}~y\in \R.
\end{cases}
$$
Since $\Phi(y,t_1)\geq 0$ for all $y\in \R$, then $\Phi>0$ in $\R \times (t_1,\infty)$.
Moreover, we may choose $M>0$ universal large enough so that $\Phi(3/4,t)\geq \Lambda\geq u(3/4,t)$ for all $t\in [t_1,0]$. 
Then $u\leq \Phi$ on $\partial_p D$, where $D= Q_{3/4}^+(u)\cap \{t\geq t_1\}$. By the maximum principle for the heat equation,
$$
u(y,t) \leq \Phi(y,t) \quad \text{ for all } (y,t) \in D.
$$
Taking $y=x$ and $t=t_2>t_1$, and using Lemma~\ref{lem:global}, we get
$$
 u(x,t_2) - u(x,t_1) \leq \Phi(x,t_2) - u(x,t_1) = \frac{2M}{\sqrt{\pi}} |t_2-t_1|^{1/2}.
$$
The proof of the lower bound is similar.
\end{proof}

As an immediate consequence of Theorems~\ref{thm:lipschitz} and \ref{thm:timereg} we obtain the $C^{1,1/2}$ regularity in space and time in the unsaturated region.

\begin{cor}[$C^{1,1/2}$ regularity] \label{cor:regu}
There is a universal constant $M>0$ such that
    $$
    |u(x_1,t_1)-u(x_2,t_2)| \leq M \big( |x_1-x_2|^2 + |t_1-t_2| \big)^{1/2},
    $$
    for all $(x_1,t_1), (x_2, t_2) \in \overline{Q_{1/2}^+(u)}$.
\end{cor}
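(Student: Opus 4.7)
The plan is to combine the spatial Lipschitz bound of Theorem~\ref{thm:lipschitz} with the time $C^{1/2}$ estimate of Theorem~\ref{thm:timereg} via a triangle inequality. The only obstacle is that the natural auxiliary point $(x_2,t_1)$ used in the triangle inequality may fall outside $\overline{Q_{1/2}^+(u)}$, where Theorem~\ref{thm:timereg} does not directly apply; this is circumvented by passing to the extension $v := u^+$, viewed as a continuous function on $\overline{Q_{1/2}}$.

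First, I would observe that $v$ coincides with $u$ on $\overline{Q_{1/2}^+(u)}$ (since $u=0$ on $\Gamma(u)$ and $u>0$ in $Q_{1/2}^+(u)$), vanishes on $\{u\leq 0\}$, and is continuous on $\overline{Q_{1/2}}$ by Theorem~\ref{thm:classical}$(ii)$. From Theorem~\ref{thm:lipschitz}, $v$ inherits the spatial Lipschitz constant $L$: indeed, $|u^+(x_1,t)-u^+(x_2,t)| \leq |u(x_1,t)-u(x_2,t)| \leq L|x_1-x_2|$ holds for a.e.~$t$, and hence for every $t$ by continuity.

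Next, I would upgrade Theorem~\ref{thm:timereg} to a global $C^{1/2}$-in-time estimate for $v$ on $\overline{Q_{1/2}}$. Given $(x,t_1)$ and $(x,t_2)$ with $t_1<t_2$, there are three cases. If both points lie in $\overline{Q_{1/2}^+(u)}$, Theorem~\ref{thm:timereg} applies directly; if both lie in $\{u\leq 0\}$, the bound is trivial since $v$ vanishes. In the mixed case, say $v(x,t_1)>0$ and $v(x,t_2)=0$, then $r(t_1)<x\leq r(t_2)$, and continuity of the free boundary graph $r$ (Theorem~\ref{thm:classical}$(iii)$) provides $t^*\in [t_1,t_2]$ with $r(t^*)=x$. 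Since $(x,t_1),(x,t^*)\in \overline{Q_{1/2}^+(u)}$ and $u(x,t^*)=0$, Theorem~\ref{thm:timereg} yields
\[
|v(x,t_1)-v(x,t_2)| = u(x,t_1) \leq C|t_1-t^*|^{1/2} \leq C|t_1-t_2|^{1/2}.
\]

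Finally, for any $(x_1,t_1),(x_2,t_2) \in \overline{Q_{1/2}^+(u)}$, the triangle inequality through the auxiliary point $(x_2,t_1)\in \overline{Q_{1/2}}$ gives
\[
|u(x_1,t_1)-u(x_2,t_2)| = |v(x_1,t_1)-v(x_2,t_2)| \leq L|x_1-x_2| + C|t_1-t_2|^{1/2},
\]
and the Cauchy--Schwarz inequality $La+Cb \leq \sqrt{L^2+C^2}\sqrt{a^2+b^2}$ applied with $a=|x_1-x_2|$ and $b=|t_1-t_2|^{1/2}$ converts this into the parabolic norm estimate with $M=\sqrt{L^2+C^2}$. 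The only genuine subtlety in the whole argument is the mixed case in the time estimate, which is resolved cleanly via the continuity of the free boundary $r$.
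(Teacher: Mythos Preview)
Your proof is correct. The paper gives no explicit argument, simply recording the corollary as an immediate consequence of Theorems~\ref{thm:lipschitz} and~\ref{thm:timereg}, and your triangle-inequality approach is the natural way to fill this in. The subtlety you raise about the auxiliary point is genuine, and your resolution via $v=u^+$ is clean; it in fact yields the stronger bound $[u^+]_{C_{x,t}^{1,1/2}(\overline{Q_{1/2}})}\leq M$ on the full cylinder, which is exactly the form stated in Theorem~\ref{thm:main}$(i)$. A marginally shorter alternative, perhaps what the authors had in mind, is to observe that since the positive phase is $\{x\geq r(t)\}$, assuming without loss of generality $x_1\leq x_2$ gives $x_2\geq x_1\geq r(t_1)$, so the auxiliary point $(x_2,t_1)$ already lies in $\overline{Q_{1/2}^+(u)}$ and Theorem~\ref{thm:timereg} applies directly without any extension.
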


\section{Non-degeneracy near the free boundary}

We will prove the non-degeneracy near the free boundary for the approximating weak solutions, which we continue calling $u$.
Let $(x_0,t_0)\in \Gamma_{1/2}(u)$ be a free boundary point, and let $Q_r(x_0,t_0)$ be a small neighborhood in $Q_1$, for some $r$ universal.
After rescaling, we may assume without loss of generality that $(x_0,t_0)=(0,0)$ and $r=1$. 
By the Harnack inequality, there is a universal constant $\lambda_0>0$ such that
$$
u(1,t)\geq \lambda_0  \quad \text{ for all } t \in [-1,0].
$$

\begin{thm}\label{thm:monotonicity}
 There exist universal constants $0<\rho_0<1/2$ and $N>0$ such that
    $$
 u_x \geq N \quad \text{ a.e. in } Q_{\rho_0}.
    $$
\end{thm}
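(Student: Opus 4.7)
The plan has two parts: the elliptic half, where non-degeneracy is essentially immediate from the explicit linear formula, and the parabolic half, which is the delicate step and which I would handle by a blow-up/compactness argument supported by a barrier construction.

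On the elliptic side, from the discussion in Section~\ref{sec:ellipticparabolic} the formula $u(x,t) = -\frac{f(t)}{1+r(t)}(x+1) + f(t)$ in $Q_{1/2}^-(u)$ together with $(H_2)$ and the uniform separation of Lemma~\ref{lem:unifsep} yields the universal bound $u_x(x,t) \geq \frac{\lambda}{2}$ a.e.\ there. The free boundary matching condition $u_x^+ = u_x^-$ transfers the same lower bound to the one-sided positive derivative along $\Gamma(u) \cap Q_{1/2}$. The theorem is therefore reduced to spreading this slope into a universal-size neighborhood on the positive side.

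For the positive side, a direct maximum principle for $v:=u_x$ (which is caloric in $\{u>0\}$) fails, since the values of $v$ on the portions of $\partial_p Q_{\rho_0}^+(u)$ not lying on $\Gamma$ are not controlled a priori. I would argue by contradiction. If non-degeneracy failed, there would be free boundary points $(x_n,t_n)\in \Gamma_{1/2}(u)$, scales $\rho_n\to 0$, and points $(y_n,s_n)\in Q_{\rho_n}(x_n,t_n)\cap\{u>0\}$ with $u_x(y_n,s_n)\to 0$. The rescalings $u_n(x,t):=\rho_n^{-1}u(x_n+\rho_n x, t_n+\rho_n^2 t)$ are approximating weak solutions on increasingly large cylinders, inheriting the universal Lipschitz estimate (Theorem~\ref{thm:lipschitz}), the $C^{1/2}$-in-time estimate on the positive phase (Theorem~\ref{thm:timereg}), elliptic-phase slopes in a universal compact set, and the uniform separation of Lemma~\ref{lem:unifsep} (whose restrictions recede to infinity under the rescaling). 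By Arzel\`a--Ascoli a subsequence converges locally uniformly to a limit $u_\infty$ on each side of the free boundary, with the zero sets $\Gamma(u_n)$ Hausdorff-converging to some continuous $\Gamma_\infty$. The crux is to show $\Gamma_\infty$ is the straight line $\{x=0\}$: once this is known, the positive phase of $u_\infty$ is caloric in a half-plane with zero Dirichlet boundary and prescribed positive Neumann slope $\alpha\in[\lambda/2,\Lambda/\delta_1]$ (inherited from matching against the linear negative phase), and a reflection/Liouville argument for caloric functions with linear growth forces $u_\infty(x,t)=\alpha x$ on $\{x>0\}$, so that $\partial_x u_\infty\equiv \alpha \geq \lambda/2$, contradicting $u_{n,x}(y_n,s_n)\to 0$.

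To pin $\Gamma_\infty$ down to a line, I would combine an analysis of zero level sets of caloric functions with a family of self-similar caloric sub- and supersolutions whose zero sets are explicit $C^{1/2}$ arcs (of the form $x=\pm C\sqrt{t_0-t}$), placed on either side of a given free boundary point; by comparison these would trap $r(t)$ in a universal $C^{1/2}$ band around that point, uniformly across scales, so that the Hausdorff limit $\Gamma_\infty$ is necessarily flat. The barriers must be built using only the universal constants $\lambda$ and $\Lambda$, must respect the weak formulation, and must be compatible with the matching to the linear elliptic phase. I expect this trapping construction to be the main obstacle: because $\Gamma$ is a priori only continuous, standard boundary-regularity tools such as Hopf's lemma or Carleson estimates are not available, and the entire argument has to be carried out intrinsically in terms of caloric comparison functions, which is precisely the delicate heart of the proof.
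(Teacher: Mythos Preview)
Your elliptic-side argument is fine and matches the paper. The gap is in the parabolic half: your blow-up strategy hinges on showing that $\Gamma_\infty=\{x=0\}$, and this step cannot succeed as written. The barriers you propose, with zero sets of the form $x=\pm C\sqrt{t_0-t}$, would at best trap $r$ in a $C^{1/2}$ band around a free boundary point. But $C^{1/2}$ curves are \emph{invariant} under the parabolic rescaling $(x,t)\mapsto(\rho_n^{-1}x,\rho_n^{-2}t)$, so the limit $r_\infty$ inherits only the same $C^{1/2}$ bound, not flatness. In fact the optimality construction later in the paper (Theorem~\ref{thm:optimal}) produces solutions with $r(t)\sim\sqrt{t}$, whose parabolic blow-up at the origin has a genuinely curved $\Gamma_\infty$; so your flatness claim is false in general. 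Without flatness the reflection/Liouville step collapses: you are left with a caloric function in a domain bounded by a merely $C^{1/2}$ graph, with Neumann data along it, which is exactly the situation you started from at unit scale.

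The paper's proof avoids blow-up entirely and works at a fixed scale. It introduces an auxiliary caloric function $v$ on the region between $\Gamma(u)$ and a nearby level set $\{u=a\}$, with the good boundary data $u_x^-\ge\lambda/\delta_1$ on $\Gamma$ and $u_x\ge 0$ on the level curve, so that $v\ge 0$ by the maximum principle; a second barrier $w$ transfers a universal positive lower bound for $v$ into a small cylinder. The remaining issue is that $u_x$ differs from $v$ by the influence of the uncontrolled initial data on the thin strip between $\Gamma$ and $\{u=a\}$; the paper handles this by locating, via a sliding-rectangle Hopf argument, a single past time $t_1$ at which this strip has width $\le c_1 a$, and then propagating smallness of $v-u_x$ forward with a heat-kernel comparison. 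Choosing $a$ small makes $v-u_x$ negligible relative to the lower bound on $v$. This direct decomposition $u_x=v-(v-u_x)$ is the idea you are missing.
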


\begin{proof}
Recall that for $-1< x < r(t)$ and $t\in [-1,0]$, we have
        $$
        u(x,t)= -  \frac{f(t)}{1+r(t)}(x+1)+f(t).
        $$
Moreover, using $f(t) \leq -\lambda$ and Lemma~\ref{lem:unifsep}, we get
\begin{equation} \label{eq:bound1}
u_x^-(x,t) = -\frac{f(t)}{1+r(t)}\geq \frac{\lambda}{\delta_1}>0 \quad \text{ in } Q_1^-(u).
\end{equation}
Hence, it remains to prove the estimate for $u_x$ in $Q_{\rho_0}^+(u)$ for some $\rho_0$ small universal. 
Note that, by Theorem~\ref{thm:lipschitz} (rescaled), we have 
\begin{equation} \label{eq:bound2}
 u_x \geq - L \quad \text{ a.e. in } Q_1.
\end{equation}
We will improve the lower bound in several steps.\medskip

\noindent \textbf{Step 1.} \textit{Selection of a level set of $u$.} 
For $0<a<\lambda_0$, 
consider the level set 
$$\Gamma_a:=\{(x,t)\in Q_1^+(u) : u(x,t)=a\}.$$ 
Since $u$ is continuous on ${Q_1^+(u)}$, $u(0,0)=0$, and $u(1,0)\geq \lambda_0$, we have that $S_a \neq \emptyset$. By Sard's theorem, $\Gamma_a$ contains a finite number of smooth curves for a.e.~$a$. By the strong maximum principle, none of these curves can be homeomorphic to a circle. Moreover, $\Gamma_a$ must contain a curve that disconnects the free boundary $\Gamma(u)$ from the lateral boundary $\{x=1\}$. Among all such curves, choose the leftmost one and call it $x=s_a(t)$. Then 
\begin{equation} \label{eq:monsa}
u_x(s_a(t),t) \geq 0 \quad \text{ for all } t\in[-1,0].
\end{equation}

Now, let $x_0=s_a(0) \in (0,1)$. By Corollary~\ref{cor:regu}, there exists a universal $\rho>0$ such that 
$$
u>0 \quad  \text{in}~Q_{\rho a}(x_0,0).
$$
Indeed, for any $(x,t)\in Q_{\rho a}(x_0,0)$ and $\rho<\tfrac{1}{2L}$ universal, we have
$$
u(x,t) \geq u(x_0,a) - L \big( |x-x_0|^2 + |t|\big)^{1/2}
\geq a - L (2\rho a) = a(1-2L\rho)>0.
$$
Similarly, choosing $\rho$ smaller if necessary, we have
\begin{equation} \label{eq:away}
    u< \frac{a}{2} \quad  \text{in}~Q_{\rho a}(0,0).
\end{equation}

\noindent\textbf{Step 2.} \textit{Construction of barriers.} Let $S$ be the rectangle $S:=(-1,\rho a)\times (-\rho^2a^2,0]$. See Figure~\ref{fig2} for an illustration of the domains defined in this step.
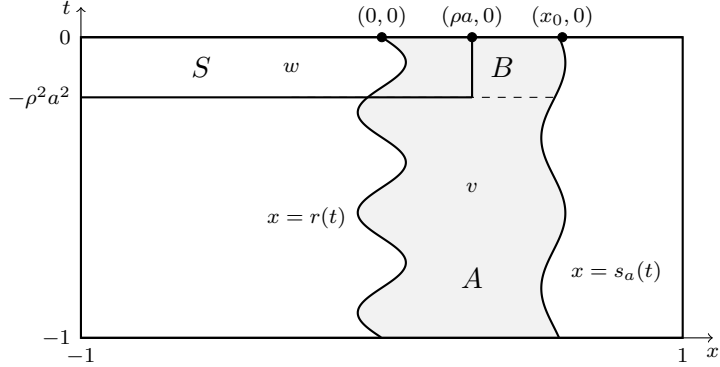
\begin{figure}[h]
\centering
\begin{tikzpicture}[scale=4] 

\draw[thick] (-1,-1) rectangle (1,0);

\draw[thick] (-1, 0) -- (1, 0);             
\draw[thick] (-1,-0.2) -- (0.3,-0.2);       
\draw[dashed] (-0.3,-0.2) -- (0.57,-0.2);   
\draw[thick] (-1,-1) -- (1,-1);             

\node[left] at (-1,  0) {\scriptsize $0$};
\node[left] at (-1,-0.2) {\scriptsize $-\rho^2 a^2$};
\node[left] at (-1, -1) {\scriptsize $-1$};
\node[below] at (-1, -1) {\scriptsize $-1$};
\node[below] at ( 1, -1) {\scriptsize $1$};


\begin{pgfonlayer}{background}
  \fill[gray!10]
    plot[domain=0:1, samples=100, smooth] 
      ({0.08*sin(deg(6*pi*\x))}, {-\x})
    --
    plot[domain=1:0, samples=100, smooth] 
      ({0.57 + 0.04*sin(deg(4*pi*\x + 0.5))}, {-\x})
    -- cycle;
    \end{pgfonlayer}


\draw[thick, domain=0:1, smooth, samples=100, variable=\t] 
  plot({0.08*sin(deg(6*pi*\t))}, {-\t});

\draw[thick, domain=0:1, smooth, samples=100, variable=\t] 
  plot({0.57 + 0.04*sin(deg(4*pi*\t + 0.5))}, {-\t});

\draw[thick] (0.3, 0) -- (0.3, -0.2);

\filldraw (0,0) circle (0.015);
\filldraw (0.3,0) circle (0.015);
\filldraw (0.6,0) circle (0.015);

\node[above] at (0,0) {\scriptsize $(0,0)$};
\node[above] at (0.3,0) {\scriptsize $(\rho a,0)$};
\node[above] at (0.6,0) {\scriptsize $(x_0,0)$};

\draw[->] (1,-1) -- (1.1,-1) node[below] {\scriptsize $x$};
\draw[->] (-1,0) -- (-1,0.1) node[left] {\scriptsize $t$};

\node at (-0.6, -0.1) {\scriptsize \normalsize $S$};
\node at (-0.3, -0.1) {\scriptsize $w$};
\node at (0.3, -0.8) {\scriptsize\normalsize $A$};
\node at (0.3, -0.5) {\scriptsize $v$};
\node at (0.4, -0.1) {\scriptsize\normalsize $B$};
\node at (-0.25, -0.6) {\scriptsize $x=r(t)$};
\node at (0.78, -0.78) {\scriptsize $x=s_a(t)$};
\end{tikzpicture}
\caption{The domains $A$, $B$, and $S$ defined in Step 2.}
\label{fig2}
\end{figure}

Let $w : \overline{S}\to[0,1]$ be the solution to
$$
\begin{cases}
w_t - w_{xx} = 0 & \text{ in } S,\\
w(-1,t) = 1 & \text{ for } t \in [-\rho^2a^2,0],\\
w(\rho a, t) = 0 & \text{ for } t \in [-\rho^2a^2,0],\\
w(x,-\rho^2 a^2) =0 & \text{ for } x \in (-1, \rho a).
\end{cases}
$$
By the maximum principle, we have $0\leq w\leq 1$ in $S$. By the interior Harnack inequality for the heat equation, there is some universal $c_0>0$ such that
\begin{equation} \label{eq:inf}
\inf_{Q_{\rho a/2}} w \geq c_0.
\end{equation}
Extending $w$ by zero for all $x \in [\rho a,1]$ and $t \in [-\rho^2 a^2, 0]$, we have that $w$ is a weak subsolution to the heat equation in $(-1,1)\times (-\rho^2 a^2, 0]$.
Next, define the domain
$$A := \{ (x,t)\in Q_1 : r(t) < x < s_a(t) \text{ and } -1<t\leq 0\}, 
$$
and let $v$ be the solution to the Cauchy-Dirichlet problem
$$
\begin{cases}
    v_t - v_{xx}=0 & \text{ in } A,\\
    v = u_x & \text{ on } \partial_l A,\\
    v = 0 & \text{ on } \partial_b A.
\end{cases}
$$
By \eqref{eq:bound1}, \eqref{eq:monsa}, and the maximum principle, we have that
\begin{align} \nonumber 
    & v \geq 0  \text{ on } A,\\ \label{eq:boundv}
    & v \geq \lambda/\delta_1  \text{ on } \partial_l A\cap \{u=0\},\\ \nonumber
    & v \geq 0  \text{ on } \partial_l A \cap \{u=a\},
\end{align}
where the boundary inequalities are satisfied for almost every time. We will see that 
$$v\geq \lambda/\delta_1 w \quad\text{ in } B:= \{(x,t)\in A : t\geq -\rho^2 a^2\}.$$ 
Indeed, since $w\leq 1$, and $v \geq \lambda/\delta_1$ on  $\partial_l A\cap \{u=0\}$, we have $v\geq \lambda/\delta_1 w$ on $\partial_l B \cap \{u=0\}.$
Also, $w(x,t)=0$ for all $(x,t) \in [\rho a,1]\times[-\rho^2 a^2, 0]$. By \eqref{eq:away}, $Q_{\rho a}$ does not intersect $\{u=a\}$, so $w=0$ on $\partial_l B \cap \{u=a\}$. Hence, $v\geq 0 = \lambda/\delta_1 w$ on $(\partial_l B\cap \{u=a\})\cup \partial_b B$.
Then
$$
v\geq \lambda/\delta_1 w \quad \text{ on } \partial_p B.
$$
By the maximum principle applied to $v-\lambda/\delta_1 w$ in $B$, and the estimate \eqref{eq:inf}, it follows that
\begin{equation} \label{eq:infv}
\inf_{Q_{\rho a/2}^+(u)} v \geq \frac{\lambda c_0}{\delta_1}>0.
\end{equation}

\medskip
\noindent\textbf{Step 3.} \textit{Improved lower bound for $u_x$.} Let $v$ be as in Step 2. Consider
$$
u_x = v - (v-u_x).
$$
In view of \eqref{eq:infv}, it remains to show that the difference $v-u_x$ can be made arbitrarily small, choosing a universal $a$ sufficiently small. To see this, we will prove that there exist universal $c_1>0$, $a_1\in (0,\lambda_0)$, and $t_1 \in (-1,-3/4]$ such that 
\begin{equation} \label{eq:small}
    s_{a_1}(t_1)- r(t_1) \leq c_1 a_1.
\end{equation}
Indeed, for $0<\sigma<2$, we define the family of rectangles
$
D_\sigma :=
(1-\sigma,1)\times (-1, -3/4].
$
By Lemma~\ref{lem:unifsep}, we have $-1+\delta_1 < r(t)<1-\delta_2$ for all $t\in [-1,0]$. Hence, there exists some $\sigma_1 \in (\delta_2, 2-\delta_1)$ such that the left boundary of $D_{\sigma_1}$ touches the free boundary for the first time at some point $(x_1, t_1)\in \{u=0\}$.

Let $\psi$ be the solution to the Cauchy-Dirichlet problem
\[
\begin{cases}
    \psi_t - \psi_{xx} = 0 & \text{in}~D_{\sigma_1},\\
    \psi(1-\sigma_1, t) = 0 & \text{for}~ t \in (-1,0], \\
    \psi(1, t) = \lambda_0 & \text{for}~ t \in (-1,0], \\
    \psi(x, -1) = 0 & \text{for } x \in [1-\sigma_1, 1].
\end{cases}
\]
By the Hopf Lemma, we have that $c_1^{-1} := \psi_x(x_1, t_1) > 0$.
Since $D_{\sigma_1} \subseteq \{u>0\}$ and $u(1, t) \geq \lambda_0$ for $t \in [-1,0]$, then $\psi \leq u$ on $\partial_p D_{\sigma_1}$. By the maximum principle for $u - \psi$ in $D_{\sigma_1}$, it follows that
$\psi \leq u$ on $D_{\sigma_1}$. Moreover, $u(x_1, t_1) = \psi(x_1, t_1) = 0$. Therefore,
\[
u(x, t_1) \geq \psi(x, t_1) \geq c_1^{-1} (x - x_1),
\]
for all $x \in [x_1, x_1 + \varepsilon_1)$, where $\varepsilon_1 \in (0, 1 - x_1)$ is universal. Let $a_1 \in (0, \lambda_0)$ be such that $s_{a_1}(t_1) < x_1 + \varepsilon_1$.
Then plugging in $x=s_{a_1}(t_1)$ into the above estimate, we get
$$
a_1 = u(s_{a_1}(t_1), t_1) \geq c_1^{-1} (s_{a_1}(t_1)-r(t_1)),
$$
where we used that that $x_1= r(t_1)$. Hence, we proved \eqref{eq:small}.

Let $I_1=(r(t_1), s_{a_1}(t_1))$. 
Then $|I_1|\leq c_1 a_1.$
Consider the global solution
$$
\begin{cases}
    \Phi_t - \Phi_{xx}=0 & \text{ in } \R\times (t_1,\infty),\\
    \Phi(x,t_1) = L \mathbbm{1}_{I_{1}} & \text{for}~x\in \R.
\end{cases}
$$
For all $x\in \R$ and $t>t_1$, we have
$$
\Phi(x,t) = \frac{L}{\sqrt{4\pi (t-t_1)}} \int_{I_1} e^{-\frac{|x-y|^2}{4(t-t_1)}} \, dy \leq L |I_1|\leq L c_1a_1, 
$$
Then $v-u_x$ satisfies the heat equation in $A$ and $v-u_x \leq \Phi$ a.e.~on $\partial_p A$ by \eqref{eq:bound2} and \eqref{eq:boundv}. By the maximum principle, it follows that 
$v-u_x \leq \Phi$ a.e.~ in $A$. In particular, for a.e.~$(x,t)\in A$ with $t\in (-1/2,0]$, we have
$$
(v-u_x)(x,t) \leq \Phi(x,t) \leq  L c_1a_1.
$$
By \eqref{eq:infv}, and choosing $a_1 < \min\big\{\lambda_0,\frac{\lambda c_0}{2Lc_1\delta_1}\big\}$, we get
$$
 u_x \geq N \quad \text{ in } Q_{\rho_0}^+(u),
$$
where $\rho_0= \rho a_1/2$ and $N= \frac{\lambda c_0}{2\delta_1}$.
\end{proof}

\section{Regularity of the free boundary}

Recall from Section~\ref{sec:ellipticparabolic} that there is a continuous function $r(t)$ such that
$$
\Gamma(u)= \{ (x,t) \in Q_{1} : x= r(t) \},
$$
where $u$ is an approximating weak solution.
We will show that $r\in C^{1/2}$ with a universal estimate, and that this regularity is optimal under the mild assumptions $(H_2)$ and $(H_3)$.

\subsection{Universal $C^{1/2}$ estimate} \label{sec:hodograph}
Our main result in this section is the following:
\begin{thm}\label{thm:interface}
 There is a universal constant $K>0$ such that
    $$
    |r(t_1)-r(t_2)| \leq K |t_1-t_2|^{1/2} \quad \text{for all}~t_1,t_2 \in [-1/2, 0].
    $$
\end{thm}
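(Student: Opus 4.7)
The plan is to combine the non-degeneracy (Theorem \ref{thm:monotonicity}) with the $C^{1/2}$ time regularity of $u$ in the unsaturated region (Corollary \ref{cor:regu}) in a direct comparison argument, possibly after rescaling so that the earlier estimates cover the full time range of interest. The key observation is that at any free boundary point, non-degeneracy produces linear growth of $u$ in the spatial variable, while Corollary \ref{cor:regu} controls $u$ along any vertical slice of $\overline{Q_{1/2}^+(u)}$; chaining the two bounds forces the horizontal displacement of the interface to be at most $|t_1 - t_2|^{1/2}$.

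Concretely, fix $t_1 < t_2$ and suppose first that $|t_2 - t_1| < \rho_0^2$ and $|r(t_1) - r(t_2)| < \rho_0$. Consider the later free boundary point $(r(t_2), t_2)$; Theorem \ref{thm:monotonicity} gives $u_x \geq N$ a.e.~in $Q_{\rho_0}(r(t_2), t_2)$. If $r(t_2) > r(t_1)$, integrating $u_x \geq N$ along the horizontal segment from $r(t_1)$ to $r(t_2)$ at time $t_1$, and using $u(r(t_1), t_1) = 0$, yields
\begin{equation*}
u(r(t_2), t_1) \geq N\,(r(t_2) - r(t_1)).
\end{equation*}
Both $(r(t_2), t_1)$ and $(r(t_2), t_2)$ lie in $\overline{Q_{1/2}^+(u)}$ (the former with $u > 0$, the latter on the free boundary with $u = 0$), so Corollary \ref{cor:regu} provides the complementary upper bound $u(r(t_2), t_1) \leq M|t_1 - t_2|^{1/2}$. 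Combining the two inequalities gives $|r(t_1) - r(t_2)| \leq (M/N)|t_1 - t_2|^{1/2}$. The symmetric case $r(t_2) < r(t_1)$ is handled by invoking the same non-degeneracy at $(r(t_2), t_2)$ but comparing along the vertical line $x = r(t_1)$: one obtains $u(r(t_1), t_2) \geq N(r(t_1) - r(t_2))$, and, using $u(r(t_1), t_1) = 0$, the identical upper bound.

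To remove the auxiliary hypothesis $|r(t_1) - r(t_2)| < \rho_0$, I would run a continuity bootstrap: fix $t_1$, let $\tau = \inf\{s > t_1 : |r(s) - r(t_1)| = \rho_0\}$, and note that the core estimate applies on $[t_1, \min(\tau, t_1 + \rho_0^2))$. Passing to the limit $s \to \tau^-$ when $\tau < t_1 + \rho_0^2$ forces $\rho_0 \leq (M/N)(\tau - t_1)^{1/2}$, hence $\tau - t_1 \geq (N\rho_0/M)^2$. Therefore whenever $t_2 - t_1$ is smaller than the universal threshold $\delta_0 := \min(\rho_0^2, (N\rho_0/M)^2)$, the $C^{1/2}$ estimate holds unconditionally; for larger time differences, the trivial bound $|r(t_1) - r(t_2)| \leq 2 \leq (2/\sqrt{\delta_0})|t_1 - t_2|^{1/2}$ suffices after absorbing into a final universal constant $K$.

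I expect the main technical obstacles to be the bookkeeping of the bootstrap and verifying that the vertical comparison points genuinely lie in the set where Corollary \ref{cor:regu} is available (which may require applying rescaled versions of that estimate to cover the full time interval $[-1/2, 0]$). The Hodograph route the authors advertise --- inverting $u(\cdot, t)$ in $x$ to produce $w(y,t)$ with $u(w(y,t),t) = y$, which satisfies the uniformly parabolic equation $w_t = w_{yy}/w_y^2$ on $\{y>0\}$ with $w(0,t) = r(t)$, and then extracting $C^{1/2}$-in-$t$ regularity at $y = 0$ from the Lipschitz bound on $w_y$ together with interior parabolic estimates --- is more conceptual and likely extends better to higher dimensions, but the direct comparison above appears to be shorter and more elementary in this one-dimensional setting.
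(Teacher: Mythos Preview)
Your argument is correct and takes a genuinely different route from the paper. The paper proceeds via the Hodograph transform: with $y = u(x,t)$ and $h(y,t) = x$ one obtains the uniformly parabolic equation $h_t = h_{yy}/h_y^2$ on a fixed half-cylinder with $h(0,t) = r(t)$; the proof then appeals to De Giorgi--Nash--Moser and Schauder theory for $w = h_y$ (which solves a divergence-form equation) to bound $\|h_{yy}\|_{L^\infty}$ with a distance weight, and finishes with the path $(0,t_1) \to (\tau,t_1) \to (\tau,t_2) \to (0,t_2)$ for $\tau = |t_1-t_2|^{1/2}$. Your approach stays in the original variables and simply chains the two already-proved ingredients: non-degeneracy turns a horizontal displacement of the interface into a lower bound on $u$, and the $C^{1/2}$ time regularity of $u^+$ converts that back into $|t_1-t_2|^{1/2}$. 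This is shorter and avoids the interior regularity machinery entirely, which is a real gain in this one-dimensional setting. What the Hodograph route buys is extensibility: once the interface is encoded as boundary data of a uniformly parabolic equation on a fixed domain, standard boundary regularity theory takes over, and (as the authors remark) smoother data would automatically yield higher regularity of $r$. Your direct comparison, by contrast, is tied to the exponent $1/2$ through Theorem~\ref{thm:timereg} and does not obviously bootstrap. The bookkeeping you flag---ensuring the comparison points lie where Theorems~\ref{thm:monotonicity} and~\ref{thm:timereg} (or Corollary~\ref{cor:regu}) apply---is indeed handled by a preliminary rescaling, exactly as in the paper's own proof.
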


To prove this theorem, we will transform the free boundary problem into a fixed transmission problem, by means of a Hodograph transform that flattens all level sets of $u$, and in particular, the free boundary. The estimate for $r(t)$ will then follow from the boundary regularity estimates for solutions to the transmission problem.\medskip

Without loss of generality, we may assume that $u(0,0)=0$.
By Theorem~\ref{thm:monotonicity}, there is a universal cylinder $Q_{\rho_0}$, where $u$ is monotone in space. 
Consider the change of variables
\begin{equation}\label{eq:hodograph1}
y=u(x,t) \quad \text{for }  (x,t)\in \overline{Q_{\rho_0}^+(u)}.
\end{equation}
Take $\rho_0/2 \leq a_0\leq \rho_0$ such that the curve $s_{a_0}(t)$ in $\{u=a_0\}$, defined as in the proof of Theorem~\ref{thm:monotonicity}, is contained in $Q_{\rho_0}^+(u)$.
If $r(t) \leq x \leq s_{a_0}(t)$ for some $t\in [-\rho_0^2, 0]$, then $0 \leq y \leq a_0$. Hence, there is some $\sigma_0>0$, depending on $\rho_0$, such that 
$\overline{ Q_{\sigma_0}^+} \subseteq [0, a_0]\times [-\rho_0^2, 0].$

We define the Hodograph transform of $u$ as the real-valued function $h$ satisfying
\begin{equation}\label{eq:hodograph2}
h(y,t) = x 	\quad \hbox{for } (y,t)\in \overline{Q_{\sigma_0}^+}.
\end{equation} 
Note that since $u$ is smooth in $Q_1^+(u)$, and continuous up to the free boundary, then 
$$h \in C^\infty(Q_{\sigma_0}^+)\cap C(\overline{Q_{\sigma_0}^+}).$$
Differentiating the equations \eqref{eq:hodograph1} and \eqref{eq:hodograph2}  with respect to $x$ and $t$, we get
$$
 u_x = \frac{1}{h_y},\quad  u_{xx} = - \frac{h_{yy}}{h_y^3}, \quad u_t = - \frac{h_t}{h_y}.
$$
By definition, $h$ is bounded and
$h(0,t)= r(t)$ for all $t \in [-\sigma_0^2,0]$. Moreover, $h$ satisfies
\begin{equation}\label{eq:eqh}
    h_t - \Big(\frac{1}{h_y^2}\Big) h_{yy}=0 \quad \hbox{in}~Q_{\sigma_0}^+,
    \end{equation}
    in the classical sense.
Note that this is a uniformly parabolic equation since 
\begin{equation} \label{eq:unifpar}
0<N \leq \frac{1}{h_y} \leq L \quad \text{ in }   Q_{\sigma_0}^+. 
\end{equation}

\begin{proof}[Proof of Theorem~\ref{thm:interface}]
Let $h$ be the Hodograph transform of $u$ defined above on $\overline{Q_{\sigma_0}^+}$. After rescaling, we may assume without loss of generality that $\sigma_0=1$.\medskip

We need to prove that $r=h(0,\cdot)\in C^{1/2}([-1/2,0])$, and
\begin{equation} 
|r(t_1)-r(t_2)|\leq K |t_1-t_2|^{1/2} \quad \text{ for all } t_1, t_2 \in [-1/2,0],
\end{equation}
where $K>0$ is a universal constant.

Indeed, let $w=h_y$. By \eqref{eq:unifpar}, we have 
\begin{equation} \label{eq:boundw}
L^{-1} \leq w\leq N^{-1} \quad \text{ in } Q_1^+.
\end{equation}
 Differentiating \eqref{eq:eqh} with respect to $y$, we see that $w$ satisfies the divergence form equation
 \begin{equation} \label{eq:divform}
w_t - \Big(\frac{1}{w^2}w_{y}\Big)_y=0 \quad \text{in } Q_1^+.
\end{equation}
By parabolic regularity theory, there is a universal $\alpha\in (0,1)$ and a constant $C>0$ such that  $w\in C^{0,\alpha}_{\loc}(Q_1^+)$, and for any compact subset $Q'$ in $Q_1^+$,  
$$
d^{\alpha} [w]_{C^{0,\alpha}({Q'})}\leq C \|w\|_{L^\infty(Q_1^+)} \leq C N^{-1},
$$
where $d$ is the parabolic distance between $Q'$ and $\partial_p Q_{1}^+$.
By \eqref{eq:boundw} and the above estimate, it follows that
$w^{-2}  \in C^{0,\alpha}_{\loc}(Q_1^+),$
with  
$$
d^\alpha [w^{-2}]_{C^{0,\alpha}({Q'})} \leq 2L^{4} N^{-1} d^\alpha [w]_{C^{0,\alpha}({Q'})} \leq 2CL^{4} N^{-2}.
$$
By the Schauder theory for \eqref{eq:divform}, we have $w\in C^{1,\alpha}_{\loc}(Q_1^+)$, and 
\begin{equation}\label{eq:estwy}
\|w\|_{L^\infty(Q')} + d \|w_y\|_{L^\infty(Q')}  + d^{1+\alpha} \|w_y\|_{C^{0,\alpha}({Q'})} \leq C_1, 
\end{equation}
where $C_1>0$ depends only on $\alpha$, $N$, and $L$. In particular, using that $w_y=h_{yy}$, we get
\begin{equation} \label{eq:esthyy}
  \|h_{yy}\|_{L^\infty(Q')} \leq C_1 d^{-1}.
\end{equation}
Let $t_1, t_2 \in [-1/2,0]$ with $t_1<t_2$. Set $\tau=\sqrt{t_2-t_1}\in \big(0,\frac{\sqrt{2}}{2} \big]$.
By the mean value theorem,
\begin{align} \nonumber
    |h(0,t_2)-h(0,t_1)| & 
    \leq |h(0,t_2)-h({\tau},t_2)| 
    + |h(\tau, t_2) - h({\tau}, t_1)|
    + |h({\tau}, t_1)-h(0,t_1)|\\ \nonumber
    & \leq \sup_{y \in [0, \tau]} |h_y(y, t_2)| {\tau} + \sup_{t\in [t_1,t_2]} |h_t({\tau},t)| \tau^2 + \sup_{y \in [0, \tau]} |h_y(y, t_1)| {\tau}\\ \label{eq:est1}
    & \leq 2N^{-1} {\tau} + \sup_{t\in [t_1,t_2]} |h_t({\tau},t)| \tau^2.
\end{align}
We need to control the second term on the right. Take $Q'=[\tfrac{\tau}{16}, \tfrac{5\tau}{4}]\times [t_1,t_2]$, which is a compact set in $Q_1^+$ such that $d\geq \frac{\tau}{16}$.
Using the equation satisfied by $h$ in \eqref{eq:eqh}, the bounds for $h_y$ in \eqref{eq:unifpar}, and the interior estimate in \eqref{eq:esthyy}, we get
\begin{align} \label{eq:est2}
     \sup_{t\in [t_1,t_2]} |h_t({\tau},t)| 
    & = \sup_{t\in [t_1,t_2]}  \frac{|h_{yy}({\tau},t)|}{h_y^2({\tau},t)} \leq L^2 \|h_{yy}\|_{L^\infty(Q')} \leq  L^2 C_1 d^{-1} \leq  16 L^2 C_1 \tau^{-1}.
\end{align}
Since $r(t)=h(0,t)$, combining \eqref{eq:est1} and \eqref{eq:est2}, we obtain
$$
 |r(t_1)-r(t_2)| \leq 2(N^{-1}+8L^2C_1) |t_1-t_2|^{1/2}.
$$
\end{proof}

\begin{rem}
If the boundary data is smoother, we expect to get higher regularity of the free boundary from this purely local Hodograph argument. To the best of our knowledge, this method is new in the context of elliptic-parabolic free boundary problems.
\end{rem}

\subsection{Optimality} 
It remains to see that the $C^{1/2}$ regularity of the free boundary is optimal.

 \begin{thm}[Optimal regularity] \label{thm:optimal}
There exist a pair of functions $(u,r)$ that satisfy
\begin{equation} \label{eq:FBP2}
\begin{cases}
u_{xx} = 0 & \text{ for } x_0<x<r(t), \ 0<t\leq T, \\
 u_t - u_{xx} = 0 & \text{ for } r(t) < x < x_1, \ 0<t\leq T,\\
u(r(t),t)=0 & \text{ for } 0<t\leq T,\\
u_x^+ (r(t), t) = u_x^- (r(t),t) & \text{ for } 0<t\leq T,\\
\end{cases}
\end{equation}
in the classical sense, for some $0<T \leq 1$ and $x_0<x_1$. Moreover,
 $$
K_1 \sqrt{t}\leq r(t) \leq K_2 \sqrt{t} \quad \text{ for all } t\in [0,T],
 $$ 
where $K_1< K_2$ are universal constants.
 \end{thm}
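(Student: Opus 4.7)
My plan is to construct the solution via a self-similar ansatz $u(x,t)=\sqrt t\,F(x/\sqrt t)$ with free boundary $r(t)=\beta\sqrt t$ for a parameter $\beta>0$ to be chosen. Since the Laplace and heat equations are both invariant under the parabolic scaling $(x,t)\mapsto(\lambda x,\lambda^2 t)$, this ansatz is internally consistent with \eqref{eq:FBP2}. In the elliptic region $\xi=x/\sqrt t<\beta$, the conditions $u_{xx}=0$ and $u(r(t),t)=0$ force $F(\xi)=a(\xi-\beta)$ for some slope $a>0$, so $u_x^-\equiv a$. In the parabolic region $\xi>\beta$, substituting the ansatz into the heat equation reduces the PDE to the linear ODE
\[
F''(\xi)+\tfrac{\xi}{2}F'(\xi)-\tfrac{1}{2}F(\xi)=0.
\]

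My key steps are the following. First I would produce a basis of fundamental solutions of the ODE: one explicit solution is $F_1(\xi)=\xi$, and reduction of order (setting $F=\xi G$) yields a second, linearly independent solution involving $e^{-\xi^2/4}$ and $\int_0^\xi e^{-s^2/4}\,ds$. Writing the general parabolic-side solution as a linear combination of the two, the Dirichlet condition $F(\beta)=0$ and the transmission condition $F'(\beta^+)=a=F'(\beta^-)$ give two linear equations in the two coefficients with nondegenerate determinant, so $F$ is uniquely determined by $(\beta,a)$. Next I would fix $\beta,a>0$ small, pick $T\in(0,1]$ and $x_0<0<x_1$, and restrict the resulting self-similar profile to $[x_0,x_1]\times(0,T]$. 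The matching conditions are built in by construction, so this produces a classical pair $(u,r)$ with $r(t)=\beta\sqrt t$, and the optimality of the $C^{1/2}$ regularity is established with $K_1=K_2=\beta$.

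The main obstacle I anticipate is verifying the sign conditions in the parabolic region, namely that the constructed $F$ is strictly positive on an interval $(\beta,\xi^\ast)$ whose length is bounded away from zero uniformly in $\beta$. Because the second fundamental solution grows linearly at infinity with a negative leading coefficient, the sign of $F$ for large $\xi$ is governed by an explicit inequality involving $\beta$ and $\int_0^\beta e^{-s^2/4}\,ds$; this can be analyzed via the standard complementary error function asymptotic, but equality is approached as $\beta\to\infty$, so some care is needed. I would likely restrict to $\beta$ small, where $F'(\beta)=a>0$ dominates and a Sturm-type argument readily rules out further zeros nearby. Regarding the ``subtle fixed point argument'' mentioned in the introduction, an alternative route that avoids committing to exact self-similarity is to set up a Schauder-type iteration on curves $r\in C^{1/2}([0,T])$ with $r(0)=0$ and $K_1\sqrt t\leq r(t)\leq K_2\sqrt t$, solving the parabolic Dirichlet problem in $\{x>r(t)\}$ at each step and updating $r$ using the transmission mismatch $u_x^+-u_x^-$. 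The subtlety there lies in maintaining the two-sided $\sqrt t$-bounds along the iteration and in controlling the degenerate behavior as $t\to 0^+$, where $r$ emanates from the origin.
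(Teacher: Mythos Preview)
Your self-similar ansatz is correct and considerably more direct than the paper's route. The paper does not exploit self-similarity at all: it works on the Hodograph side, using a Leray--Schauder fixed point to build a solution $v$ of the quasilinear Neumann problem $v_t - v_y^{-2}v_{yy}=0$ on a fixed rectangle with a jump in the Neumann data $v_y(0,t)=1+\delta$ at $t=0$, then compares $v$ to an explicit heat-equation barrier to force $v(0,t)\leq -c\delta\sqrt t$, and finally inverts back to obtain $(u,r)$ with $r(t)=v(0,t)$. The matching upper bound $|r(t)|\leq K\sqrt t$ is imported from the general $C^{1/2}$ estimate (Theorem~\ref{thm:interface}), which is why the paper only gets $K_1<K_2$. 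Your construction yields the exact law $r(t)=\beta\sqrt t$, so the two-sided bound is trivial. What the paper's approach buys is a transparent link between a discontinuity in $u_x^-$ (equivalently, in the Neumann data after Hodograph) and the $\sqrt t$ displacement of the interface, in keeping with the paper's theme; your approach gives the same optimality conclusion with far less machinery.

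The obstacle you flag---positivity of $F$ on $(\beta,\infty)$---is in fact immediate and needs neither error-function asymptotics nor smallness of $\beta$. If $F$ had a first zero $\xi_0>\beta$, then $F>0$ on $(\beta,\xi_0)$ and would attain a positive interior maximum at some $\xi_m$ with $F'(\xi_m)=0$; but the ODE then gives $F''(\xi_m)=\tfrac12 F(\xi_m)>0$, contradicting $F''(\xi_m)\leq 0$. Hence $F>0$ on all of $(\beta,\infty)$, for every $\beta>0$ and $a>0$, and you may take any $x_1>\beta\sqrt T$. This also disposes of the need for the alternative Schauder iteration you sketch; the ODE route is complete as stated.
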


We need the following maximum principle for the Cauchy-Neumann problem; e.g., see \cite[Theorem 7.1]{L}. 
For convenience, we now consider cylinders forward in time:
$$
Q:=(0,1)\times(0,1].
$$ 

\begin{lem}[Maximum principle] \label{lem:MPmixed}
Let $w: \overline{Q}\to \R$ satisfy
\begin{equation*}
\begin{cases}
w_t - a(y,t) w_{yy} \leq 0 & \text{ in } Q,\\
w_y \geq 0 & \text{ on } \partial_l Q,\\
w(y,0) \leq 0 & \text{ on } \partial_b Q,
\end{cases}
\end{equation*}
where $0<L_1 \leq a(y,t) \leq L_2$ for all $(y,t)\in Q$.
Then 
$w\leq 0$ on $\overline{Q}$.
\end{lem}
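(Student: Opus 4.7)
The plan is the classical perturbation argument for the parabolic maximum principle, reducing the weak sub-caloric inequality to a strict one and then ruling out an interior positive maximum via the second-derivative test, with the Neumann hypothesis handling the would-be maxima on the lateral boundary.

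First, I would fix $\epsilon>0$ and introduce the auxiliary function $v := w - \epsilon t$ on $\overline Q$. Using the PDE assumption on $w$, one has
\[
v_t - a(y,t)\, v_{yy} \;=\; w_t - a(y,t)\,w_{yy} - \epsilon \;\leq\; -\epsilon \;<\; 0 \quad \text{in } Q,
\]
while $v_y = w_y \geq 0$ on $\partial_l Q$ and $v(\cdot,0) = w(\cdot,0) \leq 0$ on $\partial_b Q$. It suffices to prove $v \leq 0$ on $\overline Q$ for every $\epsilon>0$: sending $\epsilon \to 0^+$ will then yield $w \leq 0$.

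Next I would argue by contradiction: assume $M := \sup_{\overline Q} v > 0$. By continuity and compactness, $M$ is attained at some $(y_0,t_0) \in \overline Q$, and because $v \leq 0$ on $\partial_b Q$ we must have $t_0 > 0$. I then consider three cases. \emph{Interior case} $y_0 \in (0,1)$: the second-derivative test gives $v_{yy}(y_0,t_0) \leq 0$, and $v_t(y_0,t_0) \geq 0$ (with equality if $t_0<1$, inequality if $t_0 = 1$), so $v_t - a v_{yy} \geq 0$ at $(y_0,t_0)$, contradicting the strict negativity of the left-hand side. \emph{Lateral case} $y_0 = 0$: the maximum condition forces the one-sided derivative $v_y(0,t_0) \leq 0$, while the Neumann hypothesis gives $v_y(0,t_0) \geq 0$, so $v_y(0,t_0) = 0$. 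A second-order Taylor expansion then yields
\[
0 \;\geq\; v(y,t_0) - v(0,t_0) \;=\; \tfrac{y^2}{2}\, v_{yy}(0,t_0) + o(y^2) \quad \text{as } y \to 0^+,
\]
forcing $v_{yy}(0,t_0) \leq 0$. Combined with $v_t(0,t_0) \geq 0$, we obtain the same contradiction with the PDE. \emph{Lateral case} $y_0 = 1$: handled by the symmetric argument, interpreting the Neumann hypothesis as the inward normal derivative being nonnegative.

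Since all three cases yield contradictions, we must have $M \leq 0$, i.e.\ $v \leq 0$ on $\overline Q$; letting $\epsilon \to 0^+$ gives $w \leq 0$ on $\overline Q$.

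The main obstacle is regularity: the pointwise arguments above presuppose that $w$ is $C^{2,1}$ in $Q$ and that $w_y$ makes sense at $\partial_l Q$. In the application of the lemma these properties are available by construction, so the proof reduces to the classical case in \cite[Theorem 7.1]{L}. If one wished to weaken the regularity, the fix would be to test the inequality against the truncation $(v - \kappa)_+$ for $\kappa > 0$ with a suitable time cutoff and integrate by parts, absorbing the lateral boundary term using the sign of $w_y$; but under the standing smoothness, the direct extremum-principle argument above suffices.
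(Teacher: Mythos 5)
The paper does not supply a proof of this lemma; it simply cites \cite[Theorem~7.1]{L}, so there is no in-text argument to compare against, only a reference. Your direct proof --- perturb to $v=w-\epsilon t$, get a strict subsolution, rule out a positive interior or lateral maximum by the second-derivative test, and let $\epsilon\to 0^+$ --- is the standard one and works under the regularity you flag.

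The point worth emphasizing is that your ``interpretation'' of the lateral condition at $y_0=1$ is not an optional reading but a necessary correction. Taken literally, ``$w_y\geq 0$ on $\partial_l Q$'' includes $w_y(1,t)\geq 0$, and with that hypothesis the lemma is \emph{false}: take $a\equiv 1$, $w(\cdot,0)=0$, $w_y(0,t)=0$, $w_y(1,t)=t$. Every stated hypothesis holds, yet $\tfrac{d}{dt}\int_0^1 w\,dy = w_y(1,t)-w_y(0,t)=t>0$, so $\int_0^1 w(\cdot,t)\,dy = t^2/2>0$ and $w$ is positive somewhere. The correct hypothesis is the inward-conormal one, $w_y(0,t)\geq 0$ and $w_y(1,t)\leq 0$; that is exactly what makes your case $y_0=1$ produce a contradiction ($v_y(1,t_0)\geq 0$ from approaching through the interior versus $v_y(1,t_0)\leq 0$ from the corrected hypothesis), whereas under the literal reading both inequalities have the same sign and no contradiction arises. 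You should announce this reading at the outset rather than when you reach that case. (In the one place the lemma is invoked, Step~2 of the proof of Theorem~\ref{keythm}, one has $\bar w_y\equiv 0$ on $\partial_l Q$, so the distinction is invisible there.)

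Beyond that, the only care needed is the one you already note: the PDE is assumed only on the open cylinder, so at a lateral maximum you should pass $v_t - a v_{yy}\leq -\epsilon$ to the limit along interior points $y_n\to 0^+$ (or $1^-$) before combining it with the endpoint sign information; this uses only $L_1\leq a\leq L_2$, not continuity of $a$. Since the fixed-point iterates in the application are classical solutions up to $\overline{Q}$, your reduction to the regular case is appropriate.
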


We also need the following auxiliary function.

\begin{lem}[Barrier] \label{lem:psi}
Let $\kappa>0$. Let $\Psi : [0,\infty) \times [0,\infty)\to \R$ satisfy
 $$
 \begin{cases}
 \Psi_t - \kappa \Psi_{yy}=0 & \text{ for } y>0,\ t>0,\\
 \Psi_ y(0,t)=1 & \text{ for } t>0,\\
 \Psi(y,0)= 0 & \text{ for } y\geq 0.
 \end{cases}
 $$
 Then  $\Psi_{yy}\leq 0$ on $(0,\infty) \times (0,\infty).$ Moreover,
 $$
 \Psi(0, t) = -  \frac{2}{\sqrt{\pi} } \sqrt{\kappa t} \quad \text{ for all } t>0.
 $$
\end{lem}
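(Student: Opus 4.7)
The plan is to write $\Psi$ down in closed form and then read off both conclusions by differentiation. Since the Neumann datum is constant and the initial data vanish, the problem is invariant under the parabolic rescaling $(y,t)\mapsto(\mu y,\mu^2 t)$, suggesting a self-similar ansatz $\Psi(y,t)=\sqrt{\kappa t}\,\phi(y/(2\sqrt{\kappa t}))$. Rather than solving the ODE for $\phi$, I would first observe that $v:=\Psi_y$ should solve the Dirichlet problem $v_t-\kappa v_{yy}=0$ with $v(0,t)=1$ and $v(y,0)=0$, whose unique bounded solution is the classical error-function profile $v(y,t)=\mathrm{erfc}(y/(2\sqrt{\kappa t}))$. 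Integrating once in $y$ (so that $\Psi$ is compatible with the self-similar scaling) produces the candidate
\[
\Psi(y,t)=y\,\mathrm{erfc}\!\left(\frac{y}{2\sqrt{\kappa t}}\right)-\frac{2\sqrt{\kappa t}}{\sqrt{\pi}}\,e^{-y^2/(4\kappa t)}.
\]

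I would then verify directly that this $\Psi$ solves the stated Cauchy-Neumann problem. Differentiating in $y$, the two contributions coming from the chain rule cancel and one obtains $\Psi_y=\mathrm{erfc}(y/(2\sqrt{\kappa t}))$, so that $\Psi_y(0,t)=\mathrm{erfc}(0)=1$. Differentiating once more gives
\[
\Psi_{yy}(y,t)=-\frac{1}{\sqrt{\pi\kappa t}}\,e^{-y^2/(4\kappa t)}<0 \quad \text{on }(0,\infty)\times(0,\infty),
\]
which is precisely the concavity claim. A direct computation of $\Psi_t$ using $\eta=y/(2\sqrt{\kappa t})$ and $\eta_t=-\eta/(2t)$ then reduces, after cancellation of the terms proportional to $\eta^2 e^{-\eta^2}$, to $-\sqrt{\kappa/\pi t}\,e^{-\eta^2}=\kappa\Psi_{yy}$, so the heat equation holds. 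The sharp Gaussian decay of both $\mathrm{erfc}(\eta)$ and $e^{-\eta^2}$ as $\eta\to\infty$ forces $\Psi(y,t)\to 0$ as $t\to 0^+$ for each fixed $y>0$, giving the initial condition. Finally, evaluating at $y=0$ yields $\Psi(0,t)=-\tfrac{2\sqrt{\kappa t}}{\sqrt{\pi}}$, the value claimed.

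There is no genuine obstacle here: the only slightly delicate point is the heuristic derivation of $v=\Psi_y$, which implicitly uses uniqueness in a class with suitable growth, but this is entirely bypassed by the guess-and-check strategy. As an alternative route to the concavity, one could apply a maximum principle to $w=\Psi_{yy}$, using that $w$ solves the heat equation with $w(y,0)=0$ and $w(0,t)=\Psi_t(0,t)/\kappa$, combined with the sign of $\Psi_t(0,t)=-\sqrt{\kappa/(\pi t)}$; but the explicit formula gives the sharper pointwise identity for free.
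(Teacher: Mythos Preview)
Your proof is correct and complete: the explicit formula
\[
\Psi(y,t)=y\,\mathrm{erfc}\!\left(\frac{y}{2\sqrt{\kappa t}}\right)-\frac{2\sqrt{\kappa t}}{\sqrt{\pi}}\,e^{-y^2/(4\kappa t)}
\]
satisfies all the conditions, and your verifications of $\Psi_y=\mathrm{erfc}(\eta)$, $\Psi_{yy}=-(\pi\kappa t)^{-1/2}e^{-\eta^2}$, the PDE, the initial data, and the boundary value are all accurate.

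The paper takes a different, more structural route. It sets $\Phi(y,t)=\Psi(|y|,t)-|y|$, observes that the Neumann condition $\Psi_y(0,t)=1$ makes this even reflection a global solution to the heat equation (after the time rescaling $t\mapsto \kappa t$) with initial data $\Phi(y,0)=-|y|$. Concavity of the initial data is then inherited for all time by the heat semigroup, giving $\Psi_{yy}=\Phi_{yy}\le 0$, and the value $\Psi(0,t)=\Phi(0,\kappa t)$ is read off from the earlier Lemma~\ref{lem:global} on $\int e^{-z^2/4s}|z|\,dz$. So the paper avoids writing $\Psi$ explicitly and instead recycles a convolution computation already done, packaging the concavity as a preservation-under-heat-flow fact. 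Your approach, by contrast, produces the closed form and the sharper pointwise identity $\Psi_{yy}=-(\pi\kappa t)^{-1/2}e^{-\eta^2}$ for free, at the cost of a few lines of chain-rule bookkeeping; it is self-contained and does not rely on Lemma~\ref{lem:global}. Both are equally valid here.
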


\begin{proof}
Let $\Phi$ be the even reflection with respect to $y$ of $\Psi -y$, i.e., for  $t\geq 0$,
$$
\Phi(y,t) =
\begin{cases}
\Psi(y,t) - y & \text{ for } y\geq 0,\\
\Psi(-y,t) + y & \text{ for } y< 0.
\end{cases}
$$
For $\kappa>0$, consider the rescaling in time
$$
\Phi_\kappa(y,t)= \Phi(y, \kappa^{-1} t).
$$
Then $\Phi_\kappa$ is a global solution to the heat equation satisfying $\Phi_\kappa(y,0)=-|y|$. Hence, $\Phi_\kappa$ is the convolution of the heat kernel and $-|y|$. Since the initial data is concave, it follows that
$$
\Psi_{yy} = (\Phi_\kappa)_{yy} \leq 0 \quad \text{ on } (0,\infty) \times (0,\infty).
$$
Moreover, by Lemma~\ref{lem:global}, 
$$
\Psi(0,t) =\Phi_\kappa(0, \kappa t) = - \frac{2}{\sqrt{\pi}} \sqrt{\kappa t} \quad \text{ for all } t>0.
$$
\end{proof}

Theorem~\ref{thm:optimal} will follow from the next key theorem.
 
 \begin{thm}\label{keythm}
There exist $\delta>0$ small universal and a classical solution to
 \begin{equation} \label{eq:optimalsol}
 \begin{cases}
 v_t - \frac{1}{v_y^2} v_{yy} = 0 & \text{ for } 0<y<1, \ 0<t\leq 1,\\
 v_y(0,t)= 1+ \delta & \text{ for } 0 < t \leq 1,\\
 v_y (1, t) = 1+ \delta \Psi_y(1,t) & \text{ for } 0 < t \leq 1,\\
v(y,0)=y & \text{ for } 0\leq y \leq 1,
 \end{cases}
 \end{equation}
 where $\Psi$ is given in Lemma~\ref{lem:psi} with $\kappa=4$, such that
 $$
 v(0,t) \leq - \frac{4\delta}{\sqrt{\pi}} \sqrt{t} \quad \text{ for all } t \in [0,1].
 $$
 \end{thm}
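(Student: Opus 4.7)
My plan is to construct the solution $v$ by a Banach fixed-point argument on the free-boundary trace $F(t):=v(0,t)$, engineered so that the desired bound is preserved by the iterate, rather than via direct comparison with the explicit sub-barrier $\bar v := y+\delta\Psi$ (which yields only the reverse inequality $v\ge\bar v$). The candidate trace $F_*(t):=-\tfrac{4\delta}{\sqrt\pi}\sqrt t$ is exactly $\bar v(0,t)$ by Lemma~\ref{lem:psi}, but the fixed point will be set up on the Dirichlet data at $y=0$ so that the inequality can be enforced from the construction.

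\textbf{Parametrisation.} For each candidate trace $F$ in a convex set $\mathcal K\subset C^{1/2}([0,1])$ containing $F_*$, I would consider the mixed Dirichlet--Neumann problem
\[
v_t-\frac{v_{yy}}{v_y^2}=0,\quad v(0,t)=F(t),\quad v_y(1,t)=1+\delta\Psi_y(1,t),\quad v(y,0)=y.
\]
For $F$ close to $F_*$ in $C^{1/2}$ and $\delta$ sufficiently small this admits a unique classical solution $v_F$ with $\tfrac12\le (v_F)_y\le 2$, by standard quasilinear parabolic theory for non-degenerate Dirichlet--Neumann problems. The induced left-Neumann trace $\Lambda(F):=(v_F)_y(0,\cdot)$ is then $C^1$ in $F$ by Schauder estimates, and the problem \eqref{eq:optimalsol} amounts to the compatibility condition $\Lambda(F)\equiv 1+\delta$.

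\textbf{Fixed point.} Set $T(F):=F+\kappa\bigl(\Lambda(F)-(1+\delta)\bigr)$ for a small universal $\kappa>0$. Existence of $v$ would follow from a Banach contraction on $(\mathcal K,\|\cdot\|_{C^{1/2}})$ provided (i) $T(\mathcal K)\subset\mathcal K$ and (ii) $T$ is a contraction for $\delta$ sufficiently small. The fixed point $F^*\in\mathcal K$ yields the classical solution $v:=v_{F^*}$ of \eqref{eq:optimalsol}, and the bound $v(0,t)=F^*(t)\le-\tfrac{4\delta}{\sqrt\pi}\sqrt t$ follows immediately from the invariance $F^*\in\mathcal K$.

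\textbf{Main obstacle.} The heart of the argument is establishing the invariance $T(\mathcal K)\subset\mathcal K$, i.e.\ showing that if $F\le F_*$ then $\Lambda(F)\ge 1+\delta$ so that $T(F)\le F$. A naive subsolution--comparison between $v_F$ and $\bar v$ is insufficient, since $\bar v$ is a subsolution (computation: $\bar v_t-\bar v_{yy}/\bar v_y^2=\delta\Psi_{yy}[4-(1+\delta\Psi_y)^{-2}]\le 0$) and hence $v_F\ge\bar v$, which pushes $(v_F)_y(0,\cdot)$ in the wrong direction. Instead, the argument exploits the exact matching of the right-boundary datum $v_y(1,t)=1+\delta\Psi_y(1,t)$ to the $\kappa=4$ similarity profile of $\Psi$: an integrated Stefan-type identity pairing the Dirichlet datum $F$ against the Neumann-to-Dirichlet map of the linearised problem at $y=0$ shows that the right-boundary contribution cancels exactly when $F=F_*$, so that $\Lambda(F_*)=1+\delta$ holds \emph{identically}; a careful perturbation about $F_*$ then delivers both invariance and contractivity. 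This is the subtle step referenced in the introduction and is the main point at which the precise value of $\kappa$ (namely $4$) in the definition of $\Psi$ enters the argument.
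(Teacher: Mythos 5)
Your plan is genuinely different from the paper's, and the crucial step is left unproved. The paper decomposes $v=y+\delta w$, obtains $w$ via a Leray--Schauder fixed point on $\delta w_y$ in $C^{0,\alpha}$ (with regularized Neumann data $\zeta_j,\chi_j$ and a limit in $j$), and then proves the trace bound by comparing $w$ directly with $\Psi$ under the \emph{linear} operator $\partial_t-a(w)\partial_{yy}$, where $a(w)=(1+\delta w_y)^{-2}$ is frozen along the solution. Because the right Neumann datum $v_y(1,t)=1+\delta\Psi_y(1,t)$ is crafted to match $\Psi$, and $\Psi_y(0,t)=1$ matches the left datum, the difference $\bar w:=w-\Psi$ has zero lateral Neumann data and zero initial data, so its sign is governed by $\bar w_t-a(w)\bar w_{yy}=\big(a(w)-\kappa\big)\Psi_{yy}$ and Lemma~\ref{lem:MPmixed}. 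This entirely bypasses the nonlinear Neumann comparison between $v$ and $\bar v=y+\delta\Psi$ that you (correctly) observe would give $v\ge\bar v$, i.e.\ the wrong one-sided inequality.

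Your barrier computation does point at a real sign subtlety: since $\Psi_{yy}\le 0$, the linearized comparison requires $a(w)\ge\kappa$, a \emph{lower} bound, so the choice $\kappa=4$ (the upper bound $\sup a$) is on the wrong side and should be the lower bound (e.g.\ $\kappa=4/9$ under $\|\delta w_y\|_\infty\le 1/2$), with the constant in the conclusion scaling as $\sqrt\kappa$. But this is a one-parameter repair inside the paper's two-step scheme, not a reason to abandon it for a heavier construction.

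The proposal itself has a genuine gap: the invariance $T(\mathcal K)\subset\mathcal K$---which is precisely the content of the desired estimate---is deferred to an unspecified ``integrated Stefan-type identity'' and ``careful perturbation about $F_*$'' that are never formulated, and it is exactly the point at which you acknowledge the naive comparison fails. There is an additional unaddressed issue: a Banach contraction on the Dirichlet-to-Neumann map $\Lambda$ in $C^{1/2}$ for a quasilinear problem requires sharp boundary Schauder estimates with quantitative dependence on the coefficient; the paper avoids any such delicate trace estimate by using Leray--Schauder compactness on $\delta w_y$, which only needs continuity of the map. As written, your proposal identifies where the difficulty lies but does not overcome it.
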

 
 \begin{proof}
 We divide the proof into two steps:\medskip
 
 \noindent \textbf{Step 1.} \textit{Existence.} Fix $\delta >0$ to be chosen small. We will show that there is a classical solution to \eqref{eq:optimalsol} using a fixed point argument. 
 Writing $v=y + \delta w$, it suffices to solve
 \begin{equation} \label{eq:fixedpoint}
 \begin{cases}
 w_t - \frac{1}{(1+\delta w_y)^2} w_{yy} = 0 & \text{ for } 0<y<1, \ 0<t\leq 1,\\
w_y(0,t)= 1 & \text{ for } 0 < t \leq 1,\\
 w_y (1, t) =\Psi_y(1,t) & \text{ for } 0 < t \leq 1,\\
w(y,0)=0 & \text{ for } 0\leq y \leq 1.
 \end{cases}
 \end{equation}
  First, we regularize the Neumann data so that $w_y$ does not jump at $t=0$.
  For each $j\geq 1$, consider smooth cut-off functions $\zeta_j : [0,1]\to [0,1]$ and $\chi_j : [0,1] \to \R$ such that 
$$
  \zeta_j(t)=
  \begin{cases}
 0 &\text{ if }  0 \leq t \leq \frac{1}{2j},\\
 1 & \text{ if } \frac{1}{j} \leq t \leq 1,
 \end{cases}          
 \quad \text{ and } \quad
   \chi_j(t)= 
 \begin{cases}
 0 & \text{ if }  0 \leq t \leq \frac{1}{2j},\\
\Psi_y(1,t)  &\text{ if }  \frac{1}{j} \leq t \leq 1.
   \end{cases}
$$
Given $\alpha \in (0,1)$, define the {Banach space}:
 $$
 \mathcal{B} := \big \{ u \in C^{0,\alpha}(\overline{Q}) : \|u\|_{L^\infty(Q)}\leq 1/2\big\}.
 $$
We define the mapping
 $$
 \begin{array}{rccl}
 T_j: &  \mathcal{B}  & \to & \mathcal{B}\\
 & \xi & \mapsto & \delta w_y^{\xi, j}, 
 \end{array}
$$
where $w^{\xi,j}$ satisfies the linear uniformly parabolic problem
$$
 \begin{cases}
 w_t - a(\xi) w_{yy} = 0 & \text{ for } 0<y<1, \ 0<t\leq 1,\\
w_y(0,t)= \zeta_j & \text{ for } 0 < t \leq 1,\\
 w_y (1, t) =\chi_j & \text{ for } 0 < t \leq 1,\\
w(y,0)=0 & \text{ for } 0\leq y \leq1,
 \end{cases}
$$
in the classical sense, with
$$
 a(\xi):= \frac{1}{(1+\xi)^2}.
$$
We need to check that $T_j$ is well-defined. First, note that if $\xi \in \mathcal{B}$, then 
 $$\tfrac{1}{4} \leq a(\xi) \leq 4.$$ 
 By the maximum principle (Lemma~\ref{lem:MPmixed}), we have
$$
\|w^{\xi,j}\|_{C(\overline Q)} \leq \sup_{t\in [0,1]} |\zeta_j(t)|+  \sup_{t\in [0,1]} |\chi_j(t)| \leq C_1,
$$
where $C_1>0$ is a universal constant independent of $j$.
Moreover, by classical parabolic theory, there exists a unique solution $w^{\xi, j}\in C^{2,\alpha}(\overline Q)$, with
\begin{equation} \label{eq:badest}
\|w^{\xi, j} \|_{C^{2,\alpha}(\overline Q)} \leq C_0 \big( 1+ \|\zeta_j\|_{C^{1,\alpha}([0,1])} +  \|\chi_j \|_{C^{1,\alpha}([0,1])} \big)\leq C_0(j).
\end{equation}
Differentiating the equation with respect to $y$, we get $\tilde w := w_y^{\xi,j}$ is a weak solution to
$$
\tilde w_t - \big(a(\xi) \tilde w_{y} \big)_y = 0 \quad \text{ in } Q,
$$
and thus, by the maximum principle for the Cauchy-Dirichlet problem \cite[Corollary~6.26]{L},
$$
\|\tilde w\|_{L^\infty(Q)}\leq \|\tilde w\|_{L^\infty(\partial_ p Q)} \leq \sup_{t\in [0,1]} |\zeta_j(t)|+  \sup_{t\in [0,1]} |\chi_j(t)| \leq C_1.
$$
Hence, choosing $\delta>0$ sufficiently small (independent of $j$), we have
$$
\|T_j \xi \|_{L^\infty(Q)} = \delta \| \tilde w\|_{L^\infty(Q)} \leq \delta C_1\leq  1/2. 
$$
Therefore, $T_j \xi \in \mathcal{B}$, and $T_j$ is well-defined.

Moreover, by the estimate \eqref{eq:badest}, we have $T_j$ maps bounded sets in $\mathcal{B}$ into bounded sets in $C^{1,\alpha}(\overline Q)$, which are precompact in $C^{0,\alpha}(\overline Q)$ by Arzel\`a-Ascoli theorem.
Hence, $T_j$ is compact.

It remains to see that $T_j$ is continuous. We need to show that for each $j$ fixed,
$$
\|\xi_k - \xi\|_{C^{0,\alpha}(\overline{Q})} \to 0 \quad \Longrightarrow \quad \|T_j \xi_k - T_j \xi \|_{C^{0,\alpha}(\overline{Q})}  \to 0.
$$
For simplicity, call $w_k=T_j\xi_k$ and $w= T_j \xi$. By construction, $w_k$ satisfies
\begin{equation} \label{eq:wk}
      (w_k)_t - a(\xi_k) (w_k)_{yy} = 0 \quad \text{ in } Q.
\end{equation}
By the $C^{2,\alpha}$ estimate for $w_k$, and Arzel\`a-Ascoli theorem, up to a subsequence, we have
$$
w_k \to \hat w \quad \text{ in } C^2(\overline Q),
$$
for some function $\hat w \in C^{2,\alpha}(\overline Q)$.
Passing to the limit in \eqref{eq:wk}, we get that $\hat w$ satisfies
$$
\hat w_t - a(\xi) \hat w_{yy} = 0 \quad \text{ in } Q.
$$
Moreover, $\hat w_y=w_y$ on $\partial_l Q$ and $\hat w = w$ on $\partial_b Q$. By the comparison principle for the Neumann problem, we see that $\hat w = w$ on $\overline Q$. Therefore, $T_j$ is continuous.

By the Leray-Schauder fixed point theorem \cite[Theorem~11.3]{GT}, for each $j \geq 1$, there exists $w^j\in \mathcal{B}$ such that $T_j w^j = w^j$. Hence, $w^j$ satisfies \eqref{eq:fixedpoint} in the classical sense (in fact, $w^j$ is smooth  by a standard bootstrap argument).
 It remains to pass to the limit as $j\to \infty$.
 
 While the $C^{2,\alpha}$ estimate in \eqref{eq:badest} blows up as $j\to \infty$, the data $\zeta_j$ and $\chi_j$ are uniformly smooth away from the corners $(0,0)$ and $(0,1)$. In fact, we have $\zeta_j = 1$ and $\chi_j = \Psi_y$ for all $t \geq 1/j$. Let $\delta>0$, and let $Q'$ be a compact subset of $\overline Q$ whose parabolic distance to the corners is greater than $\delta$. By classical parabolic estimates, 
$$
\|w^j\|_{C(\overline{Q})} + \delta^{2+\alpha} \|w^j\|_{C^{2,\alpha}({Q'})} \leq C_0 \quad \text{ for $j$ large},
$$
where $C_0>0$ does not depend on $j$.
 By compactness and a diagonal argument, for any compact subset $Q'$ of $\overline Q$, which does not contain the corners, it follows that, up to a subsequence, 
 $$w^j \to w \quad \text{ in } C(\overline{Q})\cap C^2({Q'}).$$
   Passing to the limit in the equation and the initial and boundary conditions, we get that $w$ satisfies \eqref{eq:fixedpoint} in the classical sense.

 \medskip
 \noindent \textbf{Step 2.} \textit{Upper bound at $y=0$.} Let $w$ be as in Step~1, and let $\Psi$ be as in Lemma~\ref{lem:psi}, with $\kappa=4$.
Since $\Psi_{yy}\leq 0$  on $(0,\infty)\times (0,\infty)$, and $a(w) \leq 4$, it follows that
$$
\Psi_t - a(w) \Psi_{yy} \geq \Psi_t - 4 \Psi_{yy} =  0 \quad \text{ in } Q.
$$
Hence, $\bar w:=w-\Psi$ satisfies 
\begin{equation*}
\begin{cases}
\bar w_t - a(w) \bar w_{yy} \leq 0 & \text{ in } Q,\\
\bar w_y = 0 & \text{ on } \partial_l Q,\\
\bar w = 0 & \text{ on } \partial_b Q.
\end{cases}
\end{equation*}
 By the maximum principle (Lemma~\ref{lem:MPmixed})  and Lemma~\ref{lem:psi}, we have $\bar w\leq0$, and thus,
\begin{equation} \label{eq:final}
 w(0,t) \leq \Psi(0,t) = -\frac{4}{\sqrt \pi}\sqrt{t} \quad \text{ for all } t>0.
 \end{equation}
Now, let $\delta>0$ be as in Step~1.
 Then the function 
 $$v:=y+\delta w \quad \text{ on } \overline{Q}$$ 
 is a classical solution to \eqref{eq:optimalsol}. In particular, 
 $v(0,t)=\delta w(0,t)$, for all $t\geq 0$, 
 and thus, \eqref{eq:final} yields the desired estimate. 
  \end{proof}
  
  Finally, we prove the optimal regularity theorem.
  
  \begin{proof}[Proof of Theorem~\ref{thm:optimal}]
  Let $v : \overline{Q}\to \R$ be the function given in the proof Theorem~\ref{keythm} by $v=y+\delta w$. By construction, we have $\|\delta w_y\|_{L^\infty(Q)}\leq 1/2$. Hence,
  $$
1/2 \leq v_y \leq  3/2 \quad \text{ on } \overline{Q}.
  $$
Since $v$ is monotone increasing with respect to $y$ on $\overline{Q}$, we can consider the change of variables
$$
x= v(y,t) \quad \text{ for } (y,t) \in \overline{Q}.
$$
We define the functions $u(x,t)$ and $r(t)$ as
\begin{align*}
u(v(y,t),t) = y \quad \text{ and } \quad r(t)=v(0,t).
\end{align*}
Recall that $r(0)=v(0,0)=0$. Let $x_1:= \min_{t\in [0,T]} v(1,t)$, where $T\in (0,1]$ is chosen such that $x_1>r(t)$ for all $t\in [0,T]$. Note that $T$ exists since $v(1,0)>r(0)$. Then 
$$
u(x,t) >0 \quad \text{ for } r(t)<x<x_1 \quad \text{ and } \quad u(r(t),t)=0 \quad \text{ for all } t\in [0,T].
$$
Moreover, arguing like in Section~\ref{sec:hodograph}, we see that
$$
u_t - u_{xx} = 0 \quad \text{ for } r(t)<x<x_1.
$$
It remains to solve the problem on the elliptic side. Note that
$$
u_x^+(r(t),t)= \frac{1}{v_y(0,t)} = \frac{1}{1+\delta} \quad \text{ for all } t\in (0,T].
$$
Take $x_0 :=2\min_{t\in [0,T]} r(t)-1$, and define
$$
u(x,t)= \frac{x-r(t)}{1+\delta} \quad \text{ for } x_0<x<r(t).
$$
Then $u$ is a classical solution to \eqref{eq:FBP2}.
Moreover, by Theorems~\ref{thm:interface} and \ref{keythm},
$$
- K \sqrt{t} \leq r(t) \leq - \frac{4\delta}{\sqrt{\pi}} \sqrt{t} \quad \text{ for all } t \in [0,T].
$$
  \end{proof}

\subsection*{Acknowledgments}

The authors were  supported by the NSF DMS grant 2247096.

\subsection*{Data availability and conflict of interest statements} This work has neither used nor created new data. The authors declare that they have no conflict of interest.


\end{document}